\newtheorem{definition}{Definition}
\definecolor{Gray}{gray}{0.9}
\definecolor{Reddish}{rgb}{0.9,0,0}
\newcolumntype{L}{>{$}l<{$}} 
\newtheorem{theorem}{Theorem}[section]
\newtheorem{proposition}{Proposition}[section]
\newtheorem{assumption}[theorem]{Assumption}
\newtheorem{remark}[theorem]{Remark}
\crefname{assumption}{Assumption}{Assumptions}
\definecolor{rev}{rgb}{0.0, 0.0, 0}
\pgfplotsset{compat=1.18}
\newcommand{\Eqset}[0]{\mathcal{E}}
\newcommand{\profit}[0]{\Pi}
\newcommand{\feasibleset}[0]{\mathcal{X}}
\newcommand{\minwage}[0]{J_{\min}}
\title{Idle wage as a tool to regulate the relationship between ride-hailing platforms and drivers}
\author[1,*]{Andres Fielbaum}
\author[2,3]{David Salas}
\author[1]{Ruilin Zhang}
\author[4]{Francisco Castro}
\affil[1]{\small School of Civil Engineering, University of Sydney}
\affil[2]{Instituto de Ciencias de la Ingenier\'ia, Universidad de O’Higgins}
\affil[3]{Centro de Modelamiento Matem\'atico, CNRS IRL 2807, Universidad de Chile}
\affil[4]{Anderson School of Management, University of California Los Angeles (UCLA)}
\affil[*]{\small Corresponding author: andres.fielbaum@sydney.edu.au}
\date{}
\begin{document}
\maketitle
\begin{abstract}
    Ride-hailing platforms typically classify drivers as either employees or independent contractors. These classifications tend to emphasize either wage certainty or flexibility, but rarely both. We study an alternative or complementary approach: the \textit{Idle wage,} which provides with a fixed payment drivers even when they are connected without passengers on board.

 We adapt a well-known economic model of the supply-demand equilibrium in ride-hailing platforms and analyse how the optimal welfare and profit change with the introduction of an idle wage when drivers are risk-averse.
    We show that in a single-period setting, risk aversion implies that it is optimal to pay the drivers only through an idle wage. However, if the pool of available drivers is large, even a small idle wage could attract too many drivers, rendering the system unprofitable.
    When the demand fluctuates throughout multiple periods, we show that if the idle wage has to be constant, then it is optimal to combine the idle wage with the traditional payment via trips, so that surge pricing influences the number of drivers connected. This illustrates a relevant trade-off: Risk-aversion favours using the idle wage to attract drivers; however, if the platform is not allowed to fully adjust the idle wage over time, there may be periods in which the idle wage cannot resolve the mismatch between supply and demand. We propose a partially flexible constraint that still makes the idle wage-only solution viable. It allows the idle wage to adapt per period, as long as it fulfills a total minimum wage requirement.
    Numerical simulations suggest that the idle wage policy, if properly implemented, could be beneficial for the system as a whole.

\end{abstract}

\textbf{Keywords: Ride-hailing, Idle wage, Drivers, Riders, Equilibrium}


\section{Introduction}
The transport sector has been one of the most influenced by the so-called \textit{sharing economy}. The idea that online platforms connect two parties, one of them requesting a service and the other one offering it, has one of its most distinctive examples in ride-hailing companies, where passengers are matched online with drivers in order to complete their desired trips. This has been a major change compared to traditional taxis, as online coordination has proven to be much more effective than street cruising (\cite{cramer2016disruptive}). As such, these services have rocketed in the last decade, reaching thousands of passengers and drivers in numerous cities worldwide.

Ride-hailing services have motivated different research questions and regulatory challenges. One of the most controversial ones refers to the concept of the sharing economy itself: Are the drivers truly independent agents, or is it more appropriate to describe them as employees of the ride-hailing companies? The answer is nuanced. Drivers have significantly more freedom than in a traditional job, e.g. by selecting when to work, but other crucial aspects such as the price of each trip---and the subsequent driver's earnings---or weather they are matched with a rider are defined by the company.

From a policy perspective, this debate has been on the agenda of many countries and estates, which have taken different measures(\cite{fielbaum2023job}). On the one hand, the original situation in which the drivers are regarded as independent contractors has had several negative consequences, such as extremely long workdays, safety hazards, insufficient transparency on their income, and lack of social security (\cite{fielbaum2021sharing,fielbaum2023job,goletz2021ride,ashkrof2020understanding}). On the other hand, a strict regulation might worsen the situation: for example in Spain, after a new law was introduced, some delivery companies left the country, while others required riders to bargain, which in total seems to be leading to reduced earnings\footnote{See wired.co.uk/article/spain-gig-economy-deliveroo, accessed on 16/05/2024.}. All of this suggests that some alternative measures are required.

In this context, a novel alternative was proposed in Chile when discussing a bill that would regulate ride-hailing, which we translate as\footnote{See https://repositorio.uahurtado.cl/handle/11242/26023, accesed on 18/09/2023. Finally, a different bill was approved in May 2023 without including the Idle wage.} \textbf{Idle wage}. The basic idea is to \textit{provide a structure of payment to the drivers that valorizes the time when they are connected but not transporting any passenger}. This idea was borrowed from the general labour legislation, where it deals with situations in which workers are available to work, and the only reason why they are not performing any task is because the employer has not assigned them one. In the case of ride-hailing, there are at least two reasons that make this idea worth exploring:
\begin{enumerate}
    \item The situation in which drivers are idling has some similarity to the general case just described. When a driver is connected, they cannot work in anything else, so every minute spent without passengers is actually part of their daytime without a wage.
    \item The ride-hailing company benefits when more drivers are connected. The greater the number of idling drivers, the shorter the waiting times (\cite{castillo2022matching}), and thus the better the quality of service. Arguable, these benefits provided by idle drivers should not be captured only by the company but shared with the drivers. This network effect makes this discussion specific to the transport sector and not directly transferable to other areas facing similar challenges.
\end{enumerate}

In this paper, we formalise this idea and propose a stylised model to capture its effects. Crucially, we assume drivers to be risk-avert, meaning that they prefer knowing in advance how much they will earn. On the other hand, the system can control better the relationship between supply and demand when the drivers' earnings are linked to prices. The former argument supports the introduction of the Idle wage, whereas the latter suggests that the flexibility of the current scheme should be preferred. This trade-off appears clearly in our model. 

As a first work exploring formally the idea of Idle wage, we consider a simplified model to study the overall implications of such a policy. In particular, introducing an Idle wage would affect the number of drivers deciding to connect, which in turn changes the conditions faced by the demand; that is to say, the economic equilibrium between demand and supply requires a detailed analysis. In this sense, our model is based on an average steady situation. We also assume that drivers accept all trips and they only decide whether to enter or not to the platform at a given period. These simplifying assumptions are common in the literature, and they follow the same spirit of other works like \textcite{castillo2022matching,yan2020dynamic}. The resulting model allows us to study the tension between the valorisation of idling drivers and the efficiency of the overall system.

We first consider a single period setting and show that an idle wage-only policy---as opposed to dynamic pricing---is optimal for the platform. Because drivers are risk averse, they prefer to be paid via the idle wage. In addition, the stable demand setting reduces the need for a dynamic policy and, therefore, the platform is indifferent between traditional pricing and the idle wage. We establish, however, that the effectiveness of the idle wage diminishes with the pool of potential drivers, possibly rendering the system unprofitable under high minimum wage requirements. In a multi-period setting, we demonstrate that imposing a constant idle wage throughout the periods fails to flexibly adapt to varying demand requirements and bring enough supply when needed. In turn, when a constant idle wage is in place, it must be complemented with dynamic pricing. Interestingly, we also show that in an intermediate flexible setting the platform can realise the full benefits of a fully flexible idle wage. In particular, we impose a minimum idle wage requirement that guarantees drivers a certain level of earning during a block of hours e.g., 8 hours times the minimum hourly wage. 
In all, we show under various scenarios and in numerical simulations that introducing this policy could benefit drivers and the company at the same time.

The paper is structured as follows: Section \ref{sec:LitRev} provides a summarised literature review, and identify the contributions of the present work. Section \ref{sec:model_single_period} establishes the general model we use for a single period, which is then readily extended to multiple periods. Section \ref{sec:analysis_single_period} contains the analysis of the single-period model and our main results. Section \ref{sec:multiperiod} provides some alternatives to model the Idle wage in the multi-period setting, including a proposal for minimum wage. Finally, Section \ref{sec:NumericalExperiments} illustrate our results with numerical experiments on a case of study extracted from \textcite{yan2020dynamic}. We close the article in Section \ref{sec:conclusions} with the conclusions and some future perspectives.

\section{Literature review}\label{sec:LitRev}
The literature about ride-hailing systems has rocketed in the last years, involving many different aspects such as optimal assignment methods, pricing strategies, relationship with public transport, coordination vs competition between platforms, among others. \textcite{tirachini2020ride,jin2018ridesourcing} provide a general review on the area. In this section, we focus on those papers where the relationship between drivers and the company is analysed. A main departing point is that we focus on formalising the idea of the Idle wage and we analyse its potential impacts on the economic equilibrium of the system.

\subsection{Conditions of the drivers}
A significant body of research has focused on measuring and modelling the conditions faced by drivers. Some studies have raised relevant concerns, such as the ones by \textcite{henao2019analysis,zoepf2018economics}, who found that a large percentage of drivers in the US earn less than the minimum wage, \textcite{fielbaum2021sharing}, who identified relevant safety hazards, and \textcite{mantymaki2019digital} who identify a concern stemming from the disparity of power between the platform and the drivers. On the positive side, drivers consistently value the flexibility of deciding when to work (\cite{fielbaum2021sharing,gloss2016designing,mantymaki2019digital,hall2018analysis}), a virtue that has been formally modelled and found to increase welfare by \textcite{chen2019value}. It is worth remarking that drivers' conditions are not only relevant per se, but they also impact the performance of the platform. Particularly, income satisfaction plays a very significant role in drivers' engagement with these jobs (\cite{chen2023income}).

The mentioned analyses, mostly based on surveys or interviews, actually find heterogeneity among the drivers, as some work part-time and some full-time, so the trade-off between flexibility and stability is valued differently. This categorisation has been formalised with data-based approaches that leverage information about drivers' actual workdays.  \textcite{ramezani2022empirical} identified three types of drivers in Chengdu, China:  (i) part-time drivers working flexible hours, (ii) part-time drivers working in the evenings, and (iii) full-time drivers. Similarly, \textcite{di2022analysis} also clusterised the drivers but mainly distinguishing urban and rural areas.

\subsection{Operational implications of the company-driver relationship}
The Independence that drivers have regarding when to connect, which passengers to accept, and how to route, might impact the overall efficiency of the system in many different ways. The potential misalignments between the company's and drivers' interests was studied by \textcite{afeche2018ride} in the context of matching and repositioning and by \textcite{besbes2021surge} in the context of pricing and repositioning. 
Similarly, \textcite{de2022evolution} focus on the underlying dynamics, recognising that drivers make decisions in different time frames, namely the long-term decision of registering, and a day-to-day decision of driving. The model by \textcite{castillo2022matching} and \textcite{yan2020dynamic}, which we use as a basis in our models, focuses on the equilibrium between supply and demand, showing that in some circumstances it is possible to have two equilibria, one of them the well-known \textit{wild goose chase.}

The main departing feature in our model is that the number of drivers in the system does not only react to expected earnings, but for the same expected earning, they prefer to receive it via Idle wage rather than through serving trips. This is because the earnings through serving trips have a random component, namely how lucky was a driver's day in terms of having little idling time between consecutive trips, whereas the Idle wage is not subject to any randomness.

Companies can encourage an alignment between their goals and drivers' behaviour by giving them non-mandatory suggestions or by sharing complementary information. The question about when drivers are more inclined to follow such suggestions is studied by \textcite{ashkrof2022ride}, with a focus on the acceptance of trips, and \textcite{guo2023seeking}, who concentrate on the cruising behaviour. The value of shared information was recently studied by \textcite{LiberonaEtAl2024Shared}, using a bilevel formulation for the allocation problem of idle drivers.

The full relationship between drivers and the platform is actually more complex, as drivers can coordinate to act strategically in order to induce certain responses. This has been examined by \textcite{schroder2020anomalous}, who show that drivers coordinate to create sudden supply shortages and thus induce surge pricing. \textcite{tripathy2022driver} proposes that a solution for this problem is a sort of carrot-and-stick approach, giving bonuses for drivers when they stay connected and/or a freezing time after disconnecting. 


\section{Model}\label{sec:model_single_period}
We now present a steady-state base model of a ride-hailing platform that provides rides to passengers by connecting them with drivers. The main stakeholders in our model are (i) the platform, which sets wages and prices and matches riders to drivers, (ii) the riders who, depending of the price and pickup time, decide whether or not to request a ride, and (iii) the drivers, who decide whether or not to drive for the platform in equilibrium. A salient component of our model is the {\it Idle wage}, through which drivers receive a constant income as long as they are available to serve rides. We note that in our base model, we consider a single-period case, which can be viewed as a setting where demand is stable over time. In \Cref{sec:multiperiod}, we extend our model to a multi-period setting. 

\begin{remark}\label{Remark:IdleForEveryone} It could be possible to model Idle wage as a payment only for idling drivers. However, this may lead to incorrect incentives depending on the balance between the Idle wage and the earnings per served trip. To avoid this issue, we propose an Idle wage that is paid regardless the status of the driver (idle, serving a trip, or in transit to serve a trip).  As we comment below, this formulation is equivalent in the single-period case to have Idle wage only for idle drivers, restricted to the case in which it is more convenient to be serving a passenger than to be idling.
\end{remark}

We now explain in detail the main components of our system: the demand, supply, the notion of system equilibrium, and the optimality problem faced by the platform.

\paragraph{Demand.} Let $p$ be the price set by the platform and $T$ be the average pickup time (which will eventually be determined in equilibrium) . We use $D(p,T)$ to denote the demand of riders who are willing to pay the price $p$ and are willing to wait $T$ until a drivers picks them up. We impose the following regularity conditions of the demand function. 

\begin{assumption}
$D:\mathbb{R}^2_+\to\mathbb{R}_+$ satisfies:
\begin{enumerate} 
    \item $D$ is bounded, continuously differentiable and it is strictly decreasing with respect to $p\geq 0$ and with respect to $T\in [0,\infty)$. 
    \item $D$ fulfills: 
    \begin{align}
T\cdot D(p,T) &\xrightarrow{T\to\infty} 0,&\forall p\geq 0.\label{eq:QuickEvanescent-T}\\
p\cdot D(p,T) &\xrightarrow{p\to\infty} 0, &\forall T\in[0,\infty).\label{eq:QuickEvanescent-p} 
\end{align}

\item We assume that the function $D(\cdot,0)$ is integrable, that is, $\int_0^{\infty}D(z,0)dz < \infty$, and that
    \begin{equation}\label{eq:EvanescentSurplus}
        \int_0^{\infty}D(z,T)dz \xrightarrow{T\to\infty}0.
    \end{equation}

\end{enumerate}
\end{assumption}
The first condition ensures that if the price or waiting time increases, fewer passengers will use the system. The assumption about $D$ being bounded is a natural condition as the size of the population is limited. 
The second condition establishes that as the average pickup time or the price increase, the demand for rides goes to zero. 
Note that in practice, if $p$ or $T$ are too large, the demand would be exactly zero. The assumptions given by Eqs. \eqref{eq:QuickEvanescent-T}-\eqref{eq:EvanescentSurplus} are weaker than assuming that $D$ becomes zero.

For a given pickup time $T$ and a given price $p$, the \textit{surplus} of the riders is computed as
\begin{equation}\label{eq:surplus}
    S(p,T) = \int_p^{\infty} D(z,T)dz,
\end{equation}
which represents all the extra utility of riders that are effectively taking the service. The surplus computes how much the active riders were willing to pay, on top the price $p$, for the fixed pickup time $T$. 

\paragraph{Supply.} In our model, drivers receive wages from two sources: (i) a percentage $1-\tau$, with $\tau \in (0,1)$, of the price $p$ for each served trip ($\tau$ represents the commision charged by the platform), and (ii) a constant Idle wage $J$ from being available to serve trips. We next describe in detail both sources of income and explain how they affect drivers' decisions. 

Let $Q$ be the total number of trips served in equilibrium per unit of time. For a price $p$, the total earning of drivers from trips is $(1-\tau)p Q$. In turn, denoting by $L$ the equilibrium total number of drivers on the platform, a given driver earns on average 
\begin{equation}\label{eq:expectedEarnings}
    e = (1-\tau)p\frac{Q}{L},
\end{equation}
per unit of time. We stress that in the expression above represents an average quantity, and drivers may not receive this exact income every period due to random variations. If $L = 0$ (and consequently, $Q=0$), we adopt the convention that $e=0$.

The total number of drivers $L$ on the platform is given by a supply function $\ell$ that captures the willingness to work of drivers and depends on the average earnings from trips $e$ and the Idle wage $J$, that is,
\begin{equation}\label{eq:supply_equilibrium}
L = \ell(e,J).
\end{equation}
We note that the Idle wage is received by any driver that is connected to the platform (including those drivers already serving trips). In contrast, the average earnings from trips $e$ is only earned by those drivers who are actively serving a trip. As discussed in Remark \ref{Remark:IdleForEveryone}, from a methodological perspective this is readily equivalent to pay the Idle wage only to idling drivers, and reduce the commision $\tau$ (which only matters while there are riders) accordingly. Therefore, for the rest of the paper we assume the idle wage is paid constantly while a driver is connected, regardless of their status. Moreover, from an implementation perspective, it might be more transparent to have a basic Idle wage all the time, and consider a served trip as a surplus\footnote{Note that an actual implementation of Idle wage would require some additional measures to ensure that nobody connects without driving, just to receive the Idle wage. In this simplified model, we assume that drivers always accept, or are mandate to accept, their rides.}.  

We consider the following assumptions on $\ell$:

\begin{assumption}\label{ass:supply} $\ell:\mathbb{R}^2_+\to\mathbb{R}_+$ satisfies:
    \begin{enumerate}
    \item $\ell$ is continuously differentiable at every point $(e,J)\in\mathbb{R}^2_+$.
    \item $\ell(0,0) = 0$, and $\frac{\partial \ell}{\partial e}, \frac{\partial \ell}{\partial J} >0$.
    \item For every point $(e,J)\in \mathbb{R}^2_+$, $\frac{\partial \ell}{\partial e}(e,J) \leq \frac{\partial \ell}{\partial J}(e,J)$.
\end{enumerate}
\end{assumption}
The first part is a technical condition that makes the model amenable to analysis.
 The second part states that the greater the income, the more drivers will be willing to work. The third assumption above is a crucial one and the main innovation from our model with respect to the existent literature (e.g., \cite{castillo2022matching}). This assumption implies that \textit{drivers prefer receiving one extra dollar via Idle wage rather than receiving it via trips.} This assumption relates to drivers' aversion to risk and it captures that in practice the Idle wage has no associated uncertainty, that is, under the Idle wage drivers will receive $J$ even if they don't serve trips. We differentiate between two possible degrees of risk aversion of drivers.  

 \begin{definition}[Risk-Aversion] We say that
 \end{definition}
\begin{enumerate}
    \item drivers are \textbf{risk-averse} if $\frac{\partial \ell}{\partial e}(e,J) < \frac{\partial \ell}{\partial J}(e,J)$, $\forall e,J$;
    \item and that drivers are \textbf{risk-neutral} (or that there is no aversion to risk) if  $\frac{\partial \ell}{\partial e}(e,J) = \frac{\partial \ell}{\partial J}(e,J)$, $\forall e,J$.
\end{enumerate}

Risk-averse drivers prefer to earn a deterministic wage than uncertain earnings. For risk-neutral driver, the impact of extra earnings is the same no matter if it comes from trips or from Idle wages.  In the risk-neutral case, for every $(e,J)\in \mathbb{R}^2_+$, we can easily deduce that
    \begin{equation}\label{eq:no-risk}
        \ell(e,J) = \ell(e+J,0) = \ell(0,e+J).
    \end{equation}

Finally, we define the \textit{social cost} of drivers which we will later use for welfare optimisation. The \textit{social cost} (or opportunity cost) of the drivers is the minimum income they would require to be willing to work, so that any additional income is welfare. Since Idle wage $J$ is a predictable salary, we consider that this opportunity cost is given by the value of $J$ at which drivers are willing to work, assuming $e=0$. Thus, the social cost of a  force $L$ is given by

\begin{equation}
    C(L) = \int\limits_0^L \ell_0^{-1}(z) dz,
\end{equation}
where $\ell_0(J) := \ell(0,J)$.

\paragraph{System equilibrium.} The system equilibrium is determined by the dynamics of the system and the decisions of drivers and riders. We first consider the dynamics of the system. When a driver is matched to a rider, the driver spends $T$ units of times picking up the passengers. However, this time fundamentally depends on the number of idle drivers $I$: if there are more idle drivers, then on average the distance between the driver and the rider will decrease.
We make the following assumptions.

\begin{assumption} $T:[0,\infty)\to (0,\infty]$ satisfies:
    \begin{enumerate}
    \item $T$ is strictly decreasing.
    \item $T(I)\xrightarrow{I\to \infty} 0$. 
    \item $T(0) = \infty$ and $T(I)\xrightarrow{I\to 0} \infty$.
    \item $T$  is continuously differentiable on $(0,\infty)$.
\end{enumerate}
\end{assumption}

The first three assumptions follow directly from the physical interpretation of the function $T(I)$. The fourth is needed to enable the equilibrium and optimality analysis. Observe that because $T(I)$ is strictly decreasing, then it is invertible, and that the function $I:(0,\infty]\to [0,\infty)$ defined by $I(T) = T^{-1}(T)$ is well defined, strictly decreasing and continuously differentiable on $(0,\infty)$.

After the driver has completed the pickup, the trip starts. We use $t$ to denote the length of an average trip. Hence, for every passenger in the system, a drivers spends an average of $t+T(I)$ units of time serving them. Given the system's dynamics, at equilibrium, the labour force in the system must satisfy the following balance equation:
\begin{equation}\label{eq:balanceLaborForce}
    L = I + (t + T(I))Q.
\end{equation}

\Cref{eq:balanceLaborForce} means that the total number of drivers $L$ on the platform can be computed as the sum of $I$ idle drivers plus $(t+T(I))Q$ drivers that are serving a rider. This can be interpreted as Little's Law in the context of ride-hailing (\cite{zhang2021pool}).

In addition, the demand must match the served trips, that is,
\begin{equation}\label{eq:DemandBalance}
    Q = D(p,T(I)).
\end{equation}
We are ready to introduce our equilibrium notion. 
We say that $(e,I,L,Q)$ forms an equilibrium for a fixed tuple $(p,J,\tau)$, if it belongs to the set
\begin{equation}\label{eq:EquilibriumSet}
    \Eqset(p,J,\tau) = \left\{ (e,I,L,Q)\ :\ 
    \begin{array}{l}
    \begin{array}{rcl}
       Q &=& D(p,T(I))\\
       L &=& I+ (t+T(I))Q\\
       e  &=& (1-\tau)p\frac{Q}{L}\\
       L &=& \ell(e,J)\\ 
    \end{array}\\
    e,I,Q,L\geq 0
    \end{array}\right\}
\end{equation}
That is, in equilibrium the number of passengers in the system equal the effective demand for trips, the total  force is consistent with the idle and busy drivers, and drivers who join the system do so taking into account their potential trip earnings and the Idle wage.

\paragraph{Platform's problem.} We will consider two cases: profit and welfare maximisation. Let $\bold{x} = (p,J,\tau,e,I,Q,L)$, denote the variables that determine the system. Recall that ride-hailing platform only controls directly the first three. The other four describe the equilibrium (or multiple equilibria) induced by $(p,J,\tau)$. 

The platform's profit is given by 
\begin{equation}\label{eq:profit}
\profit(\bold{x}) \triangleq \tau pQ - J\cdot L,
\end{equation}
where the first term is the revenue due to the total trips served (i.e., the commission kept by the platform), and the second corresponds to the Idle wage payment given to the drivers. 

The system's welfare is given by
\begin{equation}\label{eq:welfare}
W(\bold{x}) \triangleq S(p, T(I)) + pQ - C(L), 
\end{equation}
where the first term is the surplus of the passengers, and the second and third  terms together represent the joint surplus of the company and drivers. Note that the total Idle wage $JL$ doesn't appear in the formula of welfare, because the Idle wage can be understood as a welfare redistribution mechanism between the company and the drivers.

 The company aims to solve the following optimisation problem:
\begin{equation}\tag{$\mathcal{P}$}\label{eq:Problem-single-period}
    \begin{array}{rl}
        \displaystyle\max_{\bold{x}=(p,J,\tau,e,I,L,Q)} & \theta(\bold{x})  \\
         s.t & \begin{cases}
             p,J\geq 0,\\
             \tau \in [0,1],\\
             (e,I,L,Q)\in \Eqset(p,J,\tau),
         \end{cases} 
    \end{array}
\end{equation}
where $\theta(\bold{x})$ is either the profit of the company $\profit(\bold{x})$ or the welfare of the system $W(\bold{x})$. In what follows, we will also consider the optimal value (profit/welfare) with respect to a fixed commission $\tau\in [0,1]$. Thus, we define the function $v:[0,1]\to \mathbb{R}$ given by 
\begin{equation}\label{eq:ValueFunction}
    v(\bar{\tau}) = \max_{\bold{x}}\left\{ \theta(\bold{x}) \,:\ p,J\geq 0, (e,I,L,Q)\in \Eqset(p,J,\tau), \tau = \bar{\tau} \right\}.
\end{equation}
That is,  $v(\bar{\tau})$ as the maximum attainable $\theta$ (i.e., profit or welfare) for a given $\bar{\tau}$.
We remark that previous studies (e.g., \cite{castillo2022matching,yan2020dynamic}) have considered $\tau$ as exogenous, because this variable does not change as often as the others, that is, in short time scales the commission kept by the platform remains relatively constant. In our case, we consider the platform's problem at a more tactical level when is deciding on a payment structure that will likely stay unchanged in the short to medium term. At this more tactical level, the platform faces a natural trade-off between $1-\tau$ and $J$. Both are sources to pay drivers, with $J$ being paid with certainty to all drivers on the platform and the fraction $(1-\tau)$ of a fare only being passed to drivers who complete trips. Our aim is to shed-light on how the platform should go about balancing these two different ways of paying drivers. 

\section{Analysis and Efficiency of Idle Wage}\label{sec:analysis_single_period}
In this section, we first establish the existence of equilibria, and prove a quasi-uniquess property and show, in turn, that problem \eqref{eq:Problem-single-period} admits a solution. Then, we analyse the solution to \eqref{eq:Problem-single-period} and study the extent to which the Idle wage can help or hurt the platform's objective. 

\subsection{Existence of Economic Equilibria}

\begin{proposition}\label{thm:ExistenceEquil}
For any tuple $(p,J,\tau)\in \mathbb{R}^2_+\times [0,1]$, the set of equilibria $\Eqset(p,J,\tau)$ is nonempty.
\end{proposition}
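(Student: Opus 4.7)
My plan is to reduce the four coupled equilibrium equations to a one-dimensional problem parameterised by the number of idle drivers $I$, and then invoke the intermediate value theorem. For each $I>0$ the demand equation, the labor-force balance, and the average-earnings formula force
\[
Q(I)=D(p,T(I)),\qquad L_0(I)=I+(t+T(I))Q(I),\qquad e(I)=\frac{(1-\tau)pQ(I)}{L_0(I)},
\]
all of which are continuous in $I\in(0,\infty)$ because $T$ is continuous there and $L_0(I)\geq I>0$. An equilibrium with $I>0$ is then precisely a zero of the residual $F(I):=\ell(e(I),J)-L_0(I)$, which is itself continuous on $(0,\infty)$.

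I would next work out the two boundary limits of $F$. As $I\to 0^+$, $T(I)\to\infty$; combining monotonicity of $D(p,\cdot)$ with assumption \eqref{eq:QuickEvanescent-T} (the former gives a finite limit, the latter forces it to be zero) yields $Q(I)\to 0$ and $T(I)Q(I)\to 0$, so $L_0(I)\to 0$. Rewriting $e(I)=(1-\tau)p/(I/Q(I)+t+T(I))$, the denominator diverges and hence $e(I)\to 0$, so by continuity of $\ell$ we get $F(I)\to\ell(0,J)$. As $I\to\infty$, $T(I)\to 0$ and thus $Q(I)\to D(p,0)<\infty$ while $L_0(I)\geq I\to\infty$; this forces $e(I)\to 0$ and $F(I)\to \ell(0,J)-\infty=-\infty$.

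To finish, I would split on $J$. If $J>0$, then $\ell(0,0)=0$ combined with $\partial\ell/\partial J>0$ implies $\ell(0,J)>0$, so $F(0^+)>0$; continuity plus $F(I)\to -\infty$ then produces, via the IVT, some $I^\star>0$ with $F(I^\star)=0$, and by construction the tuple $(e(I^\star),I^\star,L_0(I^\star),Q(I^\star))$ lies in $\mathcal{E}(p,J,\tau)$. For the degenerate case $J=0$, one has $\ell(0,0)=0$ and the trivial tuple $(0,0,0,0)$ satisfies the equilibrium conditions, with the indeterminate product $T(I)Q(I)$ at $I=0$ interpreted as its (zero) limit along $I\to 0^+$.

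The step I expect to be most delicate is the boundary analysis at $I\to 0^+$, where $Q(I)$ and $L_0(I)$ both collapse while $T(I)$ diverges: it is precisely assumption \eqref{eq:QuickEvanescent-T} that prevents $L_0(I)$ from staying bounded away from zero and so guarantees the positive sign of $F(0^+)$ when $J>0$; without this evanescence, the sign pattern on which the IVT argument hinges could fail.
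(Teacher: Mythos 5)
Your proof is correct and takes essentially the same route as the paper's: both reduce the equilibrium system to a one-dimensional residual and apply the intermediate value theorem, with the only cosmetic difference being that you parameterise by the number of idle drivers $I$ while the paper uses the pickup time $z=T(I)$ (an equivalent change of variables since $T$ is a continuous strictly decreasing bijection). The boundary limits, the role of assumption \eqref{eq:QuickEvanescent-T} in forcing $L_0(I)\to 0$, and the separate trivial equilibrium for $J=0$ all match the paper's argument.
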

\begin{proof} 
In the appendix.
\end{proof}

It is crucial to realise that  \Cref{thm:ExistenceEquil} ensures that there is \textit{at least} one equilibrium, but it does not discard the existence of multiple. In fact, having several equilibria is a frequent finding in ride-hailing markets, where some of them might be undesirable because the same throughput could be achieved in a more/less efficient way. The classical example here is the ``wild goose chase'': we refer the reader to the paper by \textcite{castillo2022matching} for a detailed description of how the existence of two equilibria can be identified with this phenomenon.

While it is possible to have multiple equilibria, there are some limits to how similar they can be. Namely, once the platform has decided on $p,J,\tau$, there is at most one equilibrium for each demand level. In other words, so far we have characterised the equilibria by the tuples $(e,I,L,Q)$; however, these variables are actually intertwined in a way that fixing $Q$ determines the value for all the others in an equilibrium. This is formalised in the following proposition.

\begin{proposition} \label{thm:OneVariableSuffices}
Let $(p,J,\tau)\in \mathbb{R}^2_+\times [0,1]$, and let two tuples $(e_i,I_i,L_i,Q_i)\in \Eqset(p,J,\tau)$ with $i=1,2$. If $Q_1=Q_2$, then both tuples are equal.
\end{proposition}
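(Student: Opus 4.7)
The plan is to exploit the chain structure of the equilibrium system \eqref{eq:EquilibriumSet}, which threads the variables as $Q \to T(I) \to I \to L \to e$ with each step invertible under the assumed monotonicity. Fix $(p,J,\tau)$ and suppose $(e_i,I_i,L_i,Q_i)\in \Eqset(p,J,\tau)$ with common $Q_1=Q_2=:Q$. I would split the argument according to whether $Q>0$ or $Q=0$.

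In the generic case $Q>0$, I would first rule out $I_i=0$: if $I_i=0$, then $T(I_i)=\infty$, and combined with assumption \eqref{eq:QuickEvanescent-T} (which in particular implies $D(p,T)\to 0$ as $T\to\infty$) this would force $Q=D(p,T(I_i))=0$, a contradiction. So $I_1,I_2\in(0,\infty)$. The demand-balance equation $Q=D(p,T(I_i))$ together with strict monotonicity of $D(p,\cdot)$ then forces $T(I_1)=T(I_2)$, and strict monotonicity of $T$ on $(0,\infty)$ gives $I_1=I_2$. Substituting into the Little's-law identity $L=I+(t+T(I))Q$ yields $L_1=L_2$, and the earnings identity $e=(1-\tau)pQ/L$ finally gives $e_1=e_2$.

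In the degenerate case $Q=0$, the demand equation no longer pins down $T(I)$ uniquely, so I would invert the order of reasoning and argue from the supply side. The earnings identity together with the convention that $e=0$ whenever $L=0$ gives $e_1=e_2=0$ in both sub-cases. The supply equation $L_i=\ell(e_i,J)=\ell(0,J)$ then pins down $L_1=L_2$ to a common scalar, and the balance equation collapses to $L=I$, yielding $I_1=I_2$.

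The main obstacle is exactly this $Q=0$ boundary: the demand equation ceases to be injective in $I$ because $D(p,T(I))$ can vanish on a range of $I$ (and the chain $Q\to T(I)\to I$ breaks). Switching to drive the argument from the supply relation, and using the convention on $e$ to bypass the division-by-$L$ in the earnings identity, resolves this cleanly. The generic case $Q>0$ requires only direct substitutions once injectivity of $D(p,\cdot)$ and $T$ is invoked.
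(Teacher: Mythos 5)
Your proof is correct and follows essentially the same route as the paper's: both exploit the chain of injective determinations $Q \to T(I) \to I \to L \to e$ coming from the strict monotonicity of $D(p,\cdot)$ and $T$. The only (minor) difference is in the degenerate case $Q=0$, where the paper pushes the same forward chain through using the conventions $D(p,\cdot)^{-1}(0)=\infty$, $T^{-1}(\infty)=0$ and $\infty\cdot 0 = 0$ to conclude $I=L=e=0$, whereas you argue backwards from the supply side ($e=0$, hence $L=\ell(0,J)$, hence $I=L$); both dispositions of that boundary case are valid.
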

\begin{proof}
In the appendix.
\end{proof}

\begin{proposition} \label{Thm:ExistenceOfSolution}
Problem \eqref{eq:Problem-single-period} admits a solution.   
\end{proposition}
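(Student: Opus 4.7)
The plan is to apply the Weierstrass extreme value theorem: exhibit a compact superset of a maximising sequence, show the feasible set is closed, and show the objective is continuous. The feasible set is nonempty by \Cref{thm:ExistenceEquil}, and it always contains the trivial tuple $\mathbf{0} = (0,0,0,0,0,0,0)$, which yields $\theta(\mathbf{0}) = 0$ for both $\theta = \profit$ and $\theta = W$ (using $S(p, T(0)) = S(p, \infty) = 0$). Hence the supremum is at least $0$, so I only need to search over points with $\theta(\mathbf{x}) \geq 0$.

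\textbf{Step 1: coercivity in the controls $(p,J,\tau)$.} First I would show that outside some compact box $[0,P]\times[0,J_0]\times[0,1]$, the objective is strictly below $0$. For profit: $\profit \leq \tau p Q \leq p\cdot D(p,0)$, and \eqref{eq:QuickEvanescent-p} forces $pQ \to 0$ as $p \to \infty$; meanwhile for $J\geq 1$ one has $L \geq \ell(0,J)\geq \ell(0,1) > 0$ by monotonicity, so $JL \to \infty$ and $\profit \to -\infty$ as $J \to \infty$. For welfare: $S(p,T) + pQ$ is bounded above by $S(0,0) + \sup_{p} p D(p,0) < \infty$ via integrability of $D(\cdot,0)$ and \eqref{eq:QuickEvanescent-p}; the cost term $C(L)\geq \int_0^{\ell_0(J)}\ell_0^{-1}(z)\,dz$ dominates the bounded surplus/revenue as $J\to\infty$, and \eqref{eq:EvanescentSurplus} together with \eqref{eq:QuickEvanescent-p} handles $p\to\infty$. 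This restricts the search to $(p,J,\tau) \in K := [0,P]\times[0,J_0]\times[0,1]$.

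\textbf{Step 2: boundedness of the equilibrium variables.} On $K$, $Q \leq \sup D < \infty$ by boundedness of $D$. Using $eL = (1-\tau)pQ \leq P\sup D$, together with $L = \ell(e,J)$ and \Cref{ass:supply} (which forces $\ell(e,J) \geq \ell(0,J)$ and, whenever $J>0$, $L$ is bounded below by $\ell(0,J)>0$), one bounds $e$ from above; combining with $L\leq \ell(P\sup D,J_0)$ bounds $L$. Finally, $I\leq L$ from the balance equation \eqref{eq:balanceLaborForce} bounds $I$. Thus every equilibrium with $(p,J,\tau)\in K$ lies in a compact set.

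\textbf{Step 3: closedness of the feasible set and continuity of $\theta$.} Let $\mathbf{x}_n \to \mathbf{x}$ with each $\mathbf{x}_n$ feasible. I would pass to the limit in the four defining equations of $\Eqset$. The only delicate point is $T(I_n) \to T(I)$ when $I = 0$ (where $T$ is not continuous). Here I would use the argument: if $L > 0$ at the limit but $I_n \to 0$, then $T(I_n)\to\infty$ and $Q_n = D(p_n,T(I_n))\to 0$, while the balance equation gives $L_n - I_n = tQ_n + T(I_n)Q_n$, with $T(I_n)Q_n \to 0$ by \eqref{eq:QuickEvanescent-T}. Then $L_n - I_n\to 0$, contradicting $L>0=I$. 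Hence either the limit is the trivial equilibrium (which lies in $\Eqset(p,J,\tau)$ by the convention $e=0$ when $L=0$), or $I$ is bounded away from $0$ and continuity of $T$ on $(0,\infty)$ applies. Continuity of $\profit$ and $W$ on the set $\{L>0\}\cup\{\mathbf{0}\}$ follows from continuity of $S$, $D$, $T$ and $C$ (with $pQ\to 0$ in the degenerate case).

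\textbf{Expected main obstacle.} The principal difficulty is the closedness argument at the boundary $L=0$: the convention $e=0$ and the pole $T(0)=\infty$ make the equilibrium equations formally singular. The remedy outlined above, based on the tail assumption \eqref{eq:QuickEvanescent-T}, dovetails perfectly with the equilibrium equations, so passing to the limit is clean once this case analysis is in place. With Steps 1--3 in hand, Weierstrass applied to the continuous function $\theta$ on the nonempty compact feasible set delivers a maximiser of \eqref{eq:Problem-single-period}.
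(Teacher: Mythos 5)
Your plan is essentially the paper's proof rearranged: both establish closedness of the feasible set (with the same delicate case analysis at the degenerate boundary $L=0$, $T(I)=\infty$, resolved via the evanescence property \eqref{eq:QuickEvanescent-T}), restrict attention to a compact set by showing that maximising sequences have bounded $(p,J)$ using \eqref{eq:QuickEvanescent-p} and the growth of $JL$ (resp.\ $C(L)$), and conclude by Weierstrass. The one imprecision is your Step~1 claim that the objective is \emph{strictly} negative outside a compact box in $p$ --- with $J=0$ the profit stays nonnegative and merely tends to $0$ as $p\to\infty$ --- but this is repaired exactly as the paper does it, by observing that a maximising sequence with $p_n\to\infty$ forces the supremum to equal $0$, which is already attained at the trivial point $\mathbf{0}$.
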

\begin{proof} 
In the appendix.
\end{proof}

Propositions \ref{thm:ExistenceEquil} and \ref{Thm:ExistenceOfSolution} ensure that the problem we have posed can indeed be analysed. First, the company can choose whatever values for $p,\tau,J$, and be sure that an equilibrium will be reached (\Cref{thm:ExistenceEquil}); second, there is at least one optimal value for each of these variables (\Cref{Thm:ExistenceOfSolution}). While these results might seem intuitive, the problem is far from simple because there is no convexity involved. 

\subsection{The Scope of the Idle Wage}

We now analyse what is the role played by $J$.
We first show that there is always an optimal solution with $\tau=1$, i.e., where the drivers only receive earnings via the Idle wage. However, we then establish that if the pool of potential drivers is large, the revenue maximising Idle wage will tend to stay low. That is, when demand is stable over time, the platform would prefer to pay drivers an Idle wage, the amount of which is limited by the number of potential drivers in the market.

To formalise our first result, recall that $v(\tau)$ is the maximum attainable $\theta$ (i.e., profit or welfare) for a given $\tau$, as given in \eqref{eq:ValueFunction}.

\begin{theorem} \label{thm:tau1isoptimal}
The function $v(\tau)$ is nondecreasing, and so, $v(\tau)$ attains its maximal value at $\tau = 1$. 
\end{theorem}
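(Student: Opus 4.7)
The plan is to establish the stronger statement that $v$ is monotone: for any $0 \leq \tau_1 < \tau_2 \leq 1$, I would show $v(\tau_1) \leq v(\tau_2)$. Since \Cref{Thm:ExistenceOfSolution} guarantees a solution for every admissible commission, the conclusion that the maximum is attained at $\tau=1$ will follow immediately.

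The key observation I plan to exploit is a partial decoupling of \eqref{eq:EquilibriumSet}: the balance equations
\[
Q = D(p, T(I)), \qquad L = I + (t + T(I))Q
\]
do not involve $(\tau, J)$ at all. Starting from an optimum $\mathbf{x}^* = (p^*, J^*, \tau_1, e^*, I^*, L^*, Q^*)$, my construction is to keep $(p^*, I^*, L^*, Q^*)$ frozen, replace $\tau_1$ by $\tau_2$, set $e' = (1-\tau_2)p^* Q^*/L^* \leq e^*$, and then pick $J' \geq J^*$ so that $\ell(e', J') = L^*$. Existence of such a $J'$ follows from the strict monotonicity and continuity of $\ell(e', \cdot)$ combined with the fact that $\ell(e^*, J^*) = L^*$. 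The resulting tuple $\mathbf{x}' = (p^*, J', \tau_2, e', I^*, L^*, Q^*)$ then belongs to $\Eqset(p^*, J', \tau_2)$ by construction.

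For the welfare criterion, \eqref{eq:welfare} depends only on $(p, I, L, Q)$, so $W(\mathbf{x}') = W(\mathbf{x}^*)$ without any further work. For profit, using the identity $(\tau_2 - \tau_1)p^* Q^* = (e^* - e')L^*$ that comes from the definition of $e$, one arrives at
\[
\profit(\mathbf{x}') - \profit(\mathbf{x}^*) = L^*\bigl[(e^* - e') - (J' - J^*)\bigr],
\]
so I would reduce the claim to proving $J' - J^* \leq e^* - e'$. This is exactly where part (3) of \Cref{ass:supply} intervenes: along the level set $\{\ell = L^*\}$, implicit differentiation gives $dJ/de = -\partial_e\ell / \partial_J\ell \in [-1, 0)$, and integrating from $e'$ to $e^*$ yields $J' - J^* \leq e^* - e'$ as required.

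The main obstacle I anticipate is the bookkeeping needed to confirm that the level curve $\ell(e, J) = L^*$ can be traced from $(e^*, J^*)$ down to $(e', J')$ entirely within $\mathbb{R}^2_+$ and that the slope bound holds uniformly along the whole path. Both are direct consequences of the regularity imposed in \Cref{ass:supply}, so no deeper analytical work is needed---the real content of the proof is the combination of the decoupling of the equilibrium equations in $(\tau, J)$ and the risk-aversion inequality $\partial_e \ell \leq \partial_J \ell$.
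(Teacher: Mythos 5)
Your proposal is correct and follows essentially the same route as the paper's proof: trade trip earnings for idle wage along the level set $\{\ell = L^*\}$ while freezing $(p,I,L,Q)$, and use the risk-aversion inequality $\partial \ell/\partial e \leq \partial \ell/\partial J$ to show that the required increase in $J$ is at most the decrease in $e$ (the paper phrases this as $\partial \Pi/\partial \tau \geq 0$ along the parametrized path, which is exactly your level-curve slope bound integrated). The only point the paper treats that you omit is the degenerate case $Q^*=0$ (hence $L^*=0$), where $e'=(1-\tau_2)p^*Q^*/L^*$ is undefined; there one checks directly that $(p^*,0,\tau_2,0,0,0,0)$ is feasible with the same objective value.
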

\begin{proof}
The core of the proof consists on: First, take a solution with $\tau<1$; second, take part of what is being paid to the drivers via $e$ (i.e., when taking passengers), and replace it by the same payment via $J$. As a result, the drivers will be better-off (due to risk aversion) and nobody will be worse-off. The formalisation of this argument is provided in the appendix.
\end{proof}

The main reason for this results is two-fold: risk-aversion and stable demand (single period setting). First, note that risk-aversion implies that drivers prefer to receive their income in a non-risky way, whereas the platform is indifferent about how to pay drivers. Furthermore, if the drivers were risk-neutral (Eq. \ref{eq:no-risk}), then any $\tau$ could be optimal because from the drivers' perspective both sources of payment are equivalent. Second, in our base model, demand doesn't change over time. This means that the platform's supply needs to remain constant and, therefore, the platform doesn't face a potential supply and demand mismatch due to too many drivers being online during low demand periods. As we will see in \Cref{sec:multiperiod}, \Cref{thm:tau1isoptimal} will no longer be valid in the multi-period case.


Our main result so far is that if the demand is stable in time, it is always optimal to pay the drivers only via Idle wage. This result is valid for both profit or welfare maximisation, and stems from drivers being risk averse. We next shed-light on the effectiveness of the Idle wage as a tool for providing a worthwhile wage for drivers.
In line with the most common practice in the ride-hailing industry, we will focus on the profit-maximisation case and study how the size of the pool of potential drivers impacts the magnitude of the $J$.

Let $A>0$ be the size of the pool of potential drivers so that 
\begin{equation} \label{Eq:EllAlternative}
    \ell(e,J)=A\cdot \rho(e,J),
\end{equation}
where $\rho(e,J)$ is a function taking values in $[0,1]$ satisfying \Cref{ass:supply}.
We will now show that if $A$ is too large, then the Idle wage might be a double-edge sword. Indeed, in contrast to the payment that drivers receive due to trips served (via $p$), $J$ has to be paid to all  drivers that are available to serve trip requests, regardless of the demand. Therefore, if the number of drivers becomes too large, 
the platform would not be able to profitably sustain a high Idle wage. 

For the purpose of the next result, let us use $J^*(A,\tau)$ to denote the optimal value of $J$ as a function of $A$ for a given $\tau$.\footnote{
$J^*$ also depends on all of the other variables and functions involved. Nevertheless, in this theorem it is only $A$ that changes, so this notation is sufficient.}

\begin{theorem} \label{Thm:JisZeroForLargeA}
    For all $\tau<1$, $J^*(A,\tau) \xrightarrow{A \rightarrow \infty} 0$.
\end{theorem}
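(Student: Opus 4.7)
The plan is to proceed by contradiction. Suppose there exist $\epsilon > 0$ and a sequence $A_n \to \infty$ along which some optimal solution satisfies $J^*(A_n, \tau) \geq \epsilon$. I will derive a contradiction by showing that the profit along this sequence diverges to $-\infty$, whereas the platform can always secure nonnegative profit by setting $J=0$.

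First, I would lower bound the idle-wage payment. Writing $L^*, e^*, J^*, p^*, Q^*, I^*$ for the equilibrium quantities at the assumed optimum, \eqref{Eq:EllAlternative} gives $L^* = A_n\,\rho(e^*, J^*)$. Monotonicity of $\rho$ in each argument (Assumption~\ref{ass:supply}) together with $J^* \geq \epsilon$ gives $\rho(e^*, J^*) \geq \rho(0, \epsilon)$, and integrating $\partial \rho / \partial J > 0$ from $0$ to $\epsilon$ yields $\rho(0, \epsilon) > \rho(0, 0) = 0$. Hence $J^* L^* \geq \epsilon\, A_n\, \rho(0, \epsilon) \to \infty$.

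Second, I would upper bound the revenue by a constant that does not depend on $A$. Using $Q^* = D(p^*, T(I^*)) \leq D(p^*, 0)$ by monotonicity of $D$ in $T$, the revenue satisfies $\tau p^* Q^* \leq p^* D(p^*, 0)$. The function $p \mapsto p\, D(p, 0)$ is continuous on $[0, \infty)$, vanishes at $p = 0$, and tends to zero as $p \to \infty$ by \eqref{eq:QuickEvanescent-p}, so it attains a finite maximum $M < \infty$ that is independent of $A$. Combining the two bounds, the assumed-optimal profit along the subsequence satisfies $\Pi^*(A_n) \leq M - \epsilon\, A_n\, \rho(0, \epsilon) \to -\infty$. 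On the other hand, the feasible choice $J = 0$ (with any admissible $p$) yields $\Pi = \tau p Q \geq 0$ in every equilibrium guaranteed by \Cref{thm:ExistenceEquil}, so $\Pi^*(A_n) \geq 0$ for all $n$, which is the desired contradiction.

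The main obstacle is the uniform revenue upper bound: one must rule out the possibility that exploding labor cost is offset by unboundedly large revenue extracted through very high prices. This is exactly the role of the tail condition \eqref{eq:QuickEvanescent-p}. Once the constant $M$ is in hand, the rest is a routine contradiction driven by the linear-in-$A$ scaling of the idle-wage bill enabled by $\rho(0, \epsilon) > 0$.
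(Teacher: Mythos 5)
Your proposal is correct and follows essentially the same route as the paper's proof in Appendix 5: a uniform revenue bound $\tau p Q \leq \max_p p\,D(p,0)$ obtained from \eqref{eq:QuickEvanescent-p}, the lower bound $J L \geq \epsilon A \rho(0,\epsilon)$ from monotonicity of $\rho$, and a comparison with the nonnegative profit attainable at $J=0$. The only cosmetic differences are that you phrase the argument as a contradiction along a sequence $A_n \to \infty$ and that you explicitly justify $\rho(0,\epsilon) > 0$ from $\rho(0,0)=0$ and $\partial\rho/\partial J>0$, a step the paper leaves implicit.
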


\begin{proof}
The crux of the proof is that, for any given $J$ (no matter how small), if $A$ is large enough the company's profit would turn negative, which is worse than having $J=0$. This argument is formalised in the appendix.
\end{proof}

Assuming $\tau<1$ is needed as, otherwise, we know that $\tau=1$ is optimal and therefore $J$ can't be too low. However, this is not a strong assumption: it is the current practice, and moreover,  as we will see in section \ref{sec:multiperiod}, the optimal $\tau$ is indeed usually lower than 1 when we account for multiple periods. 

In the proof of \Cref{Thm:JisZeroForLargeA}, a novel component is that if $J$ is larger than the optimal, the profit could actually become  negative. This highlights a fundamental challenge associated to a policy like the Idle wage. \Cref{Thm:JisZeroForLargeA} suggests that, in the case of many drivers available, the platform would prefer to set $J$ the minimum allowed value.  It also establishes that any constraints on this minimum could lead to the platform opting not to offer the service at all.
Importantly, it is a well-known fact that, in some countries, driving in ride-hailing apps is especially attractive for some of the most vulnerable populations that need some easy way to get a job, e.g., recent migrants (\cite{de2023ridesourcing,james2022business}). One potential explanation of why any idea similar to Idle wage has not been implemented could be the fear of attracting too many drivers, and the need to pay that Idle wage to all of them regardless of the demand level.\footnote{A probably stronger reason is that most platforms intend to prevent laws defining the drivers as employees. A constant wage could be a strong argument to legally regard them as employees. However, this argument is of a completely different nature than the one derived from \cref{Thm:JisZeroForLargeA}, as the latter is directly related with reaching an efficient equilibrium, and therefore should be of strong interest to the regulator.} This is a relevant concern that would need to be assessed and eventually addressed for any potential policy akin to the Idle wage.

\section{Multi-Period Model} \label{sec:multiperiod}
We now consider a multi-period setting in which a day is divided into 
$h=1,\ldots,H$ periods of the same length. Each period has a different demand $D_h$ and labour supply $\ell_h$ functions. We use $\bold{D}$ and $\bold{\ell}$ to denote the vectors of demand and supply functions, respectively. 
The platform's problem in this setting is similar to \eqref{eq:Problem-single-period}, with the consideration that most quantities should be indexed by $h$, and we impose that the system must be in equilibrium in every period. 

The main difference with respect to \Cref{sec:model_single_period,sec:analysis_single_period} is in the Idle wage. As it is typically observed in ride-hailing platforms, we assume that the commission $\tau$ will remain constant throughout the day. However, we allow for the price $p$ and the Idle wage $J$ to vary over time. In particular, we allow the price to be fully flexible and we use $p_h$ to denote the price in period $h$ and $\bold{p}$ to denote the corresponding vector of prices. For the Idle wage, we consider three possible cases: (i) $J$ is fully flexibly (i.e., there is a different $J$ for every $h$); (ii) $J$ is constant throughout the day; and (iii) $J$ is allowed to vary throughout a day while satisfying certain operational constraints (see \Cref{sec:multiperiodintervals} for more details).

Case (i) is a simple extension of our single-period setting. Indeed, because in this case $J$ is fully flexible, the platform can set $\tau=1$ for each period and then the problem decouples so that the analysis and results from \Cref{sec:analysis_single_period} remain valid. Cases (ii) and (iii) require a more in-depth treatment, which we provide in \Cref{sec:SingleJ,sec:multiperiodintervals}. As we will see, as we limit the flexibility of $J$, a trade-off emerges. Since drivers are risk-averse, the platform would prefer to use $J$ to attract drivers. However, if the platform is not allowed to fully change $J$ over time, there may be periods in which an unflexible $J$ cannot handle the mismatch between supply and demand. Instead, $\tau$ multiplies the prices (which are fully flexible, as the observed surge pricing policies), and thereby affects both demand and supply, the platform may find it attractive to set $\tau < 1$ in an attempt to resolve any potential mismatches.

\subsection{The Limits of the Idle Wage and the Necessity of Dynamic Pricing}
\label{sec:SingleJ}
We know that in the single-period setting it is optimal for the platform to set $\tau=1$, i.e., to pay the drivers a fixed $J$ and keep all what is paid by the users. We now illustrate via a simple example how this result is no longer valid when the demand varies throughout the day, if $J$ cannot vary. 

We consider a two-periods scenario in which both $\tau$ and $J$ remain constant in both periods. Recall that in our base model formulation, $D$ is a function of $p$ and $T$, where $T$ depends on the number of drivers $L$ through $I$. For the example we introduce next, we consider a simplified representation, in which $D$ depends on $p$ and on $L$ directly.

\paragraph{Example.} 
Consider a profit-maximising platform, and assume that there is one high-demand and one low-demand period. We use the sub-indices $h,l$ to denote them. In the high-demand period, $D_h(p_h,L_h)=L_h$ if $p_h\leq 1$, and $0$ otherwise, whereas in the low-demand case $D_l(p_l,L_l)=L_l/2$ if $p_l\leq 1/2$ and $0$ otherwise. Obviously, the profit-maximising platform would set the prices to $p_h=1$ and $p_l=1/2$. 
Furthermore, we take $\ell(e,J)=e+(1+\varepsilon) J$ with $\varepsilon>0$ for both periods, meaning that drivers value their Idle wage income $\varepsilon$ more than their income from served trips. 

Given this setting, we can determine the equilibrium supply and demand in the system. Applying \Cref{eq:supply_equilibrium}, $L=\ell(e,J)$, to both period yields
\begin{align*}
    L_h= & \frac{(1-\tau)D_hp_h}{L_h} + (1+\varepsilon)J =  1-\tau+(1+\varepsilon)J, \\
    L_l= &\frac{(1-\tau)D_lp_l}{L_l} + (1 + \varepsilon)J=\frac{1-\tau}{4}+(1+\varepsilon)J,
\end{align*}
where we have replaced $e$ according to \Cref{eq:expectedEarnings}, we have used the definition of $D_h$ and $D_l$, and we have also replaced the optimal prices. These expressions imply that
\begin{align*}
    D_h=1-\tau+(1+\varepsilon)J \quad \text{and} \quad
    D_l=\frac{1-\tau}{8} + \frac{1+\varepsilon}{2}J.
\end{align*}
Hence, the platform's profits for each period, according to \Cref{eq:profit}, are given by 
\begin{align} 
    \profit_h&=\tau D_h - J D_h = (2+\varepsilon)\tau J + \tau - \tau^2 -J -(1+\varepsilon)J^2 \label{Eq:Utilities,h,l},\\ 
    \profit_l&= \tau D_l - J D_l = \frac{2+\varepsilon}{4} \tau J + \frac{\tau - \tau^2}{16} - (1+\varepsilon)J^2 - \frac{J}{4}  \label{Eq:Utilities,h,l2}.
\end{align}
Let us consider for a moment the hypothetical case in which the platform can set a different Idle wage in each period. In this case, we know that the optimal solution 
is to set $\tau=1$, and it is straightforward to optimise with respect to $J$ to obtain $J_h=1/2$ and $J_l=1/8$. Aligned with intuition, the optimal Idle wage is higher during the high-demand period when drivers are needed the most.

We now come back to the case in which the platform must decide a single $J$ (and a single $\tau$) for both periods. The total profit, given $J$ and $\tau$, is computed by adding up the two formulae in Eqs. \eqref{Eq:Utilities,h,l} and \eqref{Eq:Utilities,h,l2},
\begin{equation*}
    \profit =\frac{17}{16} \tau - \frac{17}{16} \tau^2 -2(1+\varepsilon)J^2 - \frac{5}{4}J + \frac{10+5\varepsilon}{4}J\tau.
\end{equation*}
We can consider three sub cases:
\begin{enumerate}
    \item [(a)] When the platform pays drivers only by using the Idle wage, i.e., $\tau=1$, the optimal Idle wage is $J=\frac{5}{16}$. In this case, the platform's profit is $(1+\varepsilon)\frac{25}{128}$.
    \item [(b)] When the platform cannot use the Idle wage, i.e., $J=0$, the optimal solution is to use $\tau=\frac{1}{2}$. In this case, the platform's profit is $\frac{34}{128}$.
    \item [(c)] When the platform can use both sources, i.e., $J$ might be greater than zero and $\tau$ lower than 1, the optimal solution is given by
\begin{equation}
  J= \begin{cases}
      
		\frac{85 \varepsilon}{4 (36 + 36 \varepsilon - 25 \varepsilon^2)}  & \mbox{if } \varepsilon \leq 0.72, \\
		0.3125 & \mbox{if } \varepsilon > 0.72,
  \end{cases}
\end{equation},

\begin{equation}\label{Eq:OptimalTauCounterExample}
    \tau= \begin{cases}
		\frac{18 + 43 \varepsilon}{36 + 36 \varepsilon - 25 \varepsilon^2}  & \mbox{if } \varepsilon \leq 0.72, \\
		1 & \mbox{if } \varepsilon > 0.72,
\end{cases}
\end{equation}

and the platform's profit is
\begin{equation}
\profit=
\begin{cases}
    		\frac{153 (1 + \varepsilon)}{16 (36 + 36 \varepsilon - 25 \varepsilon^2)} & \mbox{if } \varepsilon \leq 0.72, \\
		\frac{25}{128}(1+ \varepsilon) & \mbox{if } \varepsilon > 0.72.
  \end{cases}
\end{equation}

\end{enumerate}

Note that all three cases correspond to optimising a quadratic problem. The first two are trivial since they become unidimensional. Subcase (c) yields a system of two linear equations. The threshold $\varepsilon=0.72$ appears because for greater values, the first row in Eq. \eqref{Eq:OptimalTauCounterExample} becomes greater than $1$ (which is unfeasible for $\tau$).

We first observe that if the platform must choose between using the Idle wage only (sub case (a)) or not using the Idle wage altogether (sub case (b)) then the former dominates the latter when drivers are sufficiently risk-averse ($\varepsilon\geq 0.36$). Importantly, when drivers' risk-aversion is not strong ($\varepsilon<0.36$), the platform may prefer not to use the Idle wage and rely solely on the commission from trips served. Second, consider now sub case (c) which characterises the optimal global solution. The analysis shows that the platform prefers to only use the Idle wage (i.e., to set $\tau =1$) when drivers' risk-aversion is sufficiently high ($\varepsilon>0.72$); else, the platform sets $J>0$ and $\tau<1$. Note that $\varepsilon$ is quantifying the trade-off mentioned above: the greater its value, the more risk-averse the drivers, and therefore, the most preferable for the platform to pay via $J$; analogously, if $\varepsilon$ diminishes, then leveraging the flexiblity of paying through $e$ becomes more important.

We note that the larger the changes in the demand, the more important the role played by $\tau$. For instance, if we repeat the example but with $D_l=L_l/4$ instead of $L_l/2$ (for the same price $p=1/2$), we would need $\varepsilon>1.22$ to obtain $\tau=1$.

This example illustrates why having different demand across periods makes $\tau$ a useful and critical tool for the platform: without it ($\tau=1$), drivers' earnings are not affected by the price nor the demand (because $e$ becomes $0$ in \Cref{eq:expectedEarnings}) and, therefore, the labour supply would be constant regardless of the demand level. Dynamic pricing is known to be a powerful tool to adjust the supply to the demand levels, and it becomes viable precisely when $\tau$ is lower than 1. On the other hand, the example also shows that even when risk-aversion is not strong (the context of low but positive $\varepsilon$), the platform can still leverage the Idle wage to optimise its profit (by setting $J>0$ and $\tau<1$ as in sub case (c)).

\subsection{A Multi-Period Model with Idle Wage Constraints }\label{sec:multiperiodintervals}
In this section, we propose an intermediate version regarding the flexibility of 
$J$. A fully dynamic $J$, potentially changing by the hour, could undermine one of the primary objectives of the Idle wage policy: reducing the uncertainty faced by drivers. Conversely, an Idle wage that remains constant across different time periods may be ineffective in managing demand-supply mismatches.
 In the alternative proposed in this section, $J$ can be defined per hour, but we impose constraints on the extent to which $J$ can vary. 

One possible constraint is to allow the Idle wage to be constant in a block of hours, ensuring that drivers earn a certain income during that block. For example, the platform offers 8 hours in a day during which drivers have access to the same Idle wage each hour. Another possibility is to impose a minimum wage. For example, the total earnings of a driver via the Idle wage are above a certain preset lower bound, e.g., 8 hours times the minimum hourly wage. The specific choice of the constraint imposed on the Idle wage would depend on the given context and what policymakers deem more appropriate. In this section, we propose a general methodology that enables the assessment of different policies aimed at imposing restrictions on how a platform can set the Idle wage.

In what follows, we present a problem formulation for the case in which the platform ensures a minimum wage via the Idle wage in certain blocks of hours. We note that this formulation can readily be adjusted to assess other possible policies. We assume each time period represents an hour, and that each driver works for  $b$ hours in a day. We will impose that the platform must offer at least one block of $b$ hours in a day during which drivers' total Idle wage is above certain fixed parameter $\minwage\geq0$. Because policymakers may be concerned with drivers not having economically viable options that allow them to have enough time to rest, we will also allow for the block $b$ to be split into two sub-blocks of $b_1$  and $b_2$ hours each with a gap of $d$ hours between the end of block 1 and the beginning of block 2. For example, $b$ could be 8 hours, corresponding to a standard workday, and $b_1$ and $b_2$ could both be 4 hours with a gap of 8 hours. The latter prevents undesirable situations in which a driver is, for example, incentivised to work for 1 hour every 3 hours.

 Let us utilise the convention that  $[N] = \{1,\ldots,N\}$ for any integer $N$ and that  
 the day is cyclic, that is, with the identification $24+1 = 1$, $24+2 = 2$ and $24+3 = 3$. We can identify the admissible blocks with the initial hours as follows:
\begin{equation*}\label{eq:AdmissibleBlocks}
   \mathcal{B} = \left\{ (h_1,h_2) \in [24]\ :\ h_1+b_1-1<h_2\text{ and } (h_2+b_2-1)<h_1\right\}.
\end{equation*}
Then, with the convention that $J_{25} = J_1$, $J_{26} = J_2$ and $J_{27} = J_3$, the final constraint is given by
\begin{equation}\label{eq:MinimumWageConstraintc}
   \exists (h_1,h_2)\in \mathcal{B},\quad \sum_{h=h_1}^{h_1+b_1-1} J_h +  \sum_{h=h_2}^{h_2+b_1} J_h\geq \minwage.
\end{equation}
Defining, 
\begin{equation*}\label{eq:MinimumWageConstraint}
  M(\mathbf{J}) \triangleq \max_{(h_1,h_2)\in \mathcal{B}}\left\{ \sum_{h=h_1}^{h_1+3} J_h +  \sum_{h=h_2}^{h_2+3} J_h\right\},
\end{equation*}
we can write the platform's problem as
\begin{equation}\tag{$\mathcal{P}_{\min}$}\label{eq:Opt-MultiPeriod-MinimumWage}
    \begin{array}{cl}
        \displaystyle\max_{p,J,\tau,e,I,L,Q} & \sum_{h=1}^{24}\theta(p_h,J_h,\tau, e_h,I_h,L_h,Q_h) \\
         s.t. &\left\{\begin{array}{l}
         p,J\geq 0, \tau \in [0,1],\\
           M(\mathbf{J})\geq \minwage,\\
        \forall h\in [24],\,\,(e_h,I_h,L_h,Q_h)\in\Eqset(p_h,J_h,\tau).
        \end{array}\right.
    \end{array}
\end{equation}
The following result establishes that problem \eqref{eq:Opt-MultiPeriod-MinimumWage} is well posed.

\begin{theorem}\label{thm:Existence-minimumWage}
    Suppose that there exists one feasible solution of the multi-period problem without minimum wage constraints and with $J=0$, such that
    \[
    \exists (h_1,h_2)\in \mathcal{B},\quad \sum_{h=h_1}^{h_1+3} e_h +  \sum_{h=h_2}^{h_2+3} e_h\geq \minwage,
    \]
    with the convention $e_{25} = e_1$, $e_{26} = e_2$ and $e_{27} = e_3$. Then, Problem \eqref{eq:Opt-MultiPeriod-MinimumWage} admits a solution.\\
    
    Moreover, if there is no aversion to risk, then the solution $(p^{\ast},J^{\ast},\tau^{\ast},e^{\ast},I^{\ast},L^{\ast},Q^{\ast})$ of \eqref{eq:Opt-MultiPeriod-MinimumWage} has the same objective value as the reference point, that is,
    \[
    \sum_{h=1}^{24}\theta(p_h,J_h,\tau, e_h,I_h,L_h,Q_h) = \sum_{h=1}^{24}\theta(p_h^{\ast},J_h^{\ast},\tau^{\ast}, e_h^{\ast}, I_h^{\ast},L_h^{\ast},Q_h^{\ast}).
    \]
\end{theorem}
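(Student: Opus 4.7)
The plan is to first exhibit a feasible point of \eqref{eq:Opt-MultiPeriod-MinimumWage} by transforming the reference, then to argue by compactness (mirroring the proof of \Cref{Thm:ExistenceOfSolution}) that the supremum is attained; the ``moreover'' equality will drop out because the same transformation preserves the objective value under risk-neutrality.

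First, I would construct a feasible candidate for \eqref{eq:Opt-MultiPeriod-MinimumWage} following the idea behind \Cref{thm:tau1isoptimal}: set $\tilde{p}_h=p_h$, $\tilde{\tau}=1$, and pick $\tilde{J}_h$ so that $\ell(0,\tilde{J}_h)=\ell(e_h,0)=L_h$. This is well defined because $\ell_0(\cdot)=\ell(0,\cdot)$ is strictly increasing by \Cref{ass:supply}(2), and by construction the tuple $(0,I_h,L_h,Q_h)$ belongs to $\Eqset(p_h,\tilde{J}_h,1)$. Under risk-neutrality \eqref{eq:no-risk} forces $\tilde{J}_h=e_h$, so $M(\tilde{\mathbf{J}})=M(\mathbf{e})\geq\minwage$ by hypothesis; in the general case where $\tilde{J}_h\leq e_h$ I would simply enlarge the entries of $\tilde{\mathbf{J}}$ along some admissible block $(h_1,h_2)\in\mathcal{B}$ until the minimum-wage threshold is met, and invoke \Cref{thm:ExistenceEquil} for the corresponding single-period equilibria.

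Second, I would attain the supremum by a compactness argument analogous to the one in \Cref{Thm:ExistenceOfSolution}. The evanescence conditions \eqref{eq:QuickEvanescent-T}--\eqref{eq:EvanescentSurplus} together with the boundedness of $D$ and the balance equations \eqref{eq:balanceLaborForce}--\eqref{eq:DemandBalance} give uniform bounds on $(p_h,Q_h,I_h,L_h)$ in each period; since the constraint $M(\mathbf{J})\geq\minwage$ is closed and the objective is continuous in all variables, the classical Weierstrass argument delivers a maximiser.

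Third, for the risk-neutral ``moreover'' statement, the transformation just described preserves the objective value period by period: $\tilde{\tau}\tilde{p}_h\tilde{Q}_h-\tilde{J}_h\tilde{L}_h = p_hQ_h - e_hL_h = \tau p_h Q_h$ (profit), while the tuple $(p_h,I_h,Q_h,L_h)$ is unchanged so welfare is also identical. Thus the optimum of \eqref{eq:Opt-MultiPeriod-MinimumWage} is at least the reference value; conversely, every feasible point of \eqref{eq:Opt-MultiPeriod-MinimumWage} is also feasible for the unconstrained multi-period problem, so interpreting the reference as an optimiser of the latter forces the optimum of \eqref{eq:Opt-MultiPeriod-MinimumWage} to coincide with it. The hard part will be the compactness step, where I will need to combine the single-period bounds across all $24$ periods in a way compatible with the global coupling constraint $M(\mathbf{J})\geq\minwage$, and to verify that the nonlinear equilibrium relations $(e_h,I_h,L_h,Q_h)\in\Eqset(p_h,J_h,\tau)$ remain a closed set-valued condition after incorporating this additional restriction.
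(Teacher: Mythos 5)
Your proposal is correct and follows essentially the same route as the paper: exhibit a feasible point of \eqref{eq:Opt-MultiPeriod-MinimumWage} by converting trip earnings into Idle wage and raising $J_h$ along an admissible block until $M(\mathbf{J})\geq \minwage$ (invoking \Cref{thm:ExistenceEquil} for the new per-period equilibria), then note that under risk-neutrality the swap $e\mapsto J$ leaves the equilibrium tuple $(p_h,I_h,L_h,Q_h)$ and hence the objective value unchanged. If anything you are more complete than the paper, which silently skips the Weierstrass/compactness step you rightly identify (closedness of $\{M(\mathbf{J})\geq\minwage\}$ plus the period-wise bounds from the proof of \Cref{Thm:ExistenceOfSolution}, with your constructed feasible point replacing $\mathbf{0}$ as the reference value in the boundedness argument).
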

\begin{proof}
The existence of a solution is a direct consequence of Theorem \ref{thm:ExistenceEquil}. We know that, period by period, we can modify $J_h$ and find a new equilibrium, so we can increase $J_h$ till $M(\bold{J})\geq J_{\min}$. 

If drivers are risk-averse, one can reduce $e$ by increasing $\tau$, and compensate exactly via $J$, so that the same payment is kept in every period.  This will produce the same set of equilibria, keeping the value function unaltered.
\end{proof}

The proof of Theorem \ref{thm:Existence-minimumWage} is based on the fact that, when transforming the expected earnings $e$ in Idle wage $J$, the obtained configuration remains feasible. When drivers are risk averse, the swapping $e$ to $J$ will attract more drivers, and thus for every period $i=h\in [24]$, one has that $I_h^{\prime} \geq I_h$ even if $p_i^{\prime} = p_i$, and so we get
    \[
    Q_h^{\prime} = D(p_h,T(I^{\prime}_h)) \geq D(p_h,T(I_h)) = Q_h
    \]
This means that the quality of service improves, the surplus of riders is higher (for the same price, they get better service) and more trips are served. However, the labour force also increases, so neither the profit of the company nor the welfare are necessarily better. In the first case, the company needs to cover a higher cost of larger labour force (through $J_h$). In the second case, the system needs to absorb the social cost of more drivers.

On the other hand, we remark that the optimal solution of problem \eqref{eq:Opt-MultiPeriod-MinimumWage} also has $\tau=1$ in every period. Periods that are not part of the two 4-hours blocks are optimised independently, so Theorem \ref{thm:tau1isoptimal} applies to each of them. For periods $h$ that are part of those blocks, the same argument as previous cases hold: if $\tau<1$, one can replicate the same equilibrium with $\tau=1$ and an increased $J_h$, which would increase profit and not decrease welfare.

Finally, we note that the main innovation in problem \eqref{eq:Opt-MultiPeriod-MinimumWage} is the constraint given in \Cref{eq:MinimumWageConstraintc}. While this constraint captures a minimum wage condition, we could easily change to capture other potential policies. For example, we could add another combinatorial constraint that requires the existence of an 8 hrs block with two 4 hours sub-blocks that ensure a constant per hour Idle wage. 



\section{Numerical Experiments}\label{sec:NumericalExperiments}
In this section, we quantify the impact of $J$ by running numerical experiments. As the problem is non-convex, in order to ensure that the global optimum is reached we perform an exhaustive grid search, with steps 0.05 for $\tau \in [0,1]$, 0.05 for $J \in [0,2.8]$ (where the maximum value was obtained by inspection), and 0.01 for the other variables. For each potential combination of $p,T,J$, when we find more than one equilibrium, we keep the one maximising the objective function.

\subsection{Specific formulas and parameters}
In sections \ref{sec:model_single_period}-\ref{sec:multiperiod}, the maximisation problems are written using generic functions for $D_h(p,T), T(I), \ell(e,J)$. Following \textcite{yan2020dynamic}, we use the following formulas for each of them:
\begin{align}
    D_h(p,T)=&\lambda_h\frac{e^{u_h(p,T)}}{1+e^{u_h(p,T)}}, &\text{ with } & u_h(p,T)=\kappa + \beta_p \cdot p + \beta_T \cdot T \label{Eq:FormulaD} \\ 
    I(T)=&\left(\frac{T}{K_T}\right)^\frac{1}{\alpha_T} \label{Eq:FormulaT}\\
    \ell(e,J) = &A_h\cdot\left(\frac{\beta e + J}{1+\frac{1}{\varepsilon_L}} \right)^{\varepsilon_L} \label{Eq:Formulaell}
\end{align}

Let us describe these equations one by one. In Eq. \eqref{Eq:FormulaD}, $u_h(p,T)$ represents the utility obtained by the users. Therefore, $\lambda_h$ represents the maximum number of potential users in period $h$, and the greater the utility, the greater the demand. The constants $\beta_p, \beta_T$ are negative numbers, and $\kappa$ captures the base utility\footnote{In the original paper by \textcite{yan2020dynamic}, they actually use $\kappa+\kappa_h$, so that the base utility is period-dependent. However, they only report the average of those $\kappa_h$, which is why we have merged the sum into a single figure. Let us remark that different periods still exhibit completely different results thanks to the $\lambda_h$.}. The numeric value for all these constants is reported in the Appendix and come from \textcite{yan2020dynamic}.

Eq. \eqref{Eq:FormulaT} shows that $T$ indeed decreases with $I$. The constant $\alpha_T>1$ captures the fact that the marginal effect of every extra driver decreases. This is consistent with previous findings in the spatial aspects of on-demand mobility (\cite{besbes2021surge,fielbaum2023economies}): once the spatial area is reasonably covered by idle drivers, adding extra ones will not have such a relevant effect on diminishing waiting times.

Eq. \eqref{Eq:Formulaell} is where risk aversion comes into play. The parameter $\beta \leq 1$ represents risk aversion. If $\beta=1$, the numerator is exactly the expected earnings of the drivers. If $\beta<1$, it means that drivers penalise every expected dollar earned via $e$ because of its uncertainty. Therefore, in our simulations, we will compare the results for varying values of $\beta$. The parameter $A_h$ controls the maximum number of drivers per period, and $\varepsilon_L$ is the elasticity. The values for these parameters are also reported in the appendix. Only $A_h$ is varied in some experiments, to analyse numerically the result from Theorem \ref{Thm:JisZeroForLargeA}; when we vary $A_h$ below, we explicitly mention it.

Let us remark that all the assumptions explained in Section $\ref{sec:model_single_period}$ are directly verified by the functions described in Eqs. \eqref{Eq:FormulaD}-\eqref{Eq:Formulaell}.

\subsection{Results}
Let us begin analysing the effect of $J$ in the single-period context, considering the period $h=19$ in isolation. This is analysed in Figure \ref{fig:SinglePeriodDifferentJ}, where we show the best possible welfare/profit achievable for any given $J$. That is, $p$ and $\tau$ are optimised. Recall that if $J$ can be chosen freely, then in this single-period case it is optimal to select $\tau=1$, so we mark every point in the curve where $\tau=1$ is optimal. The most important conclusions from Fig. \ref{fig:SinglePeriodDifferentJ} are:

\begin{figure}[h]
    \centering
    \includegraphics[width=150mm]{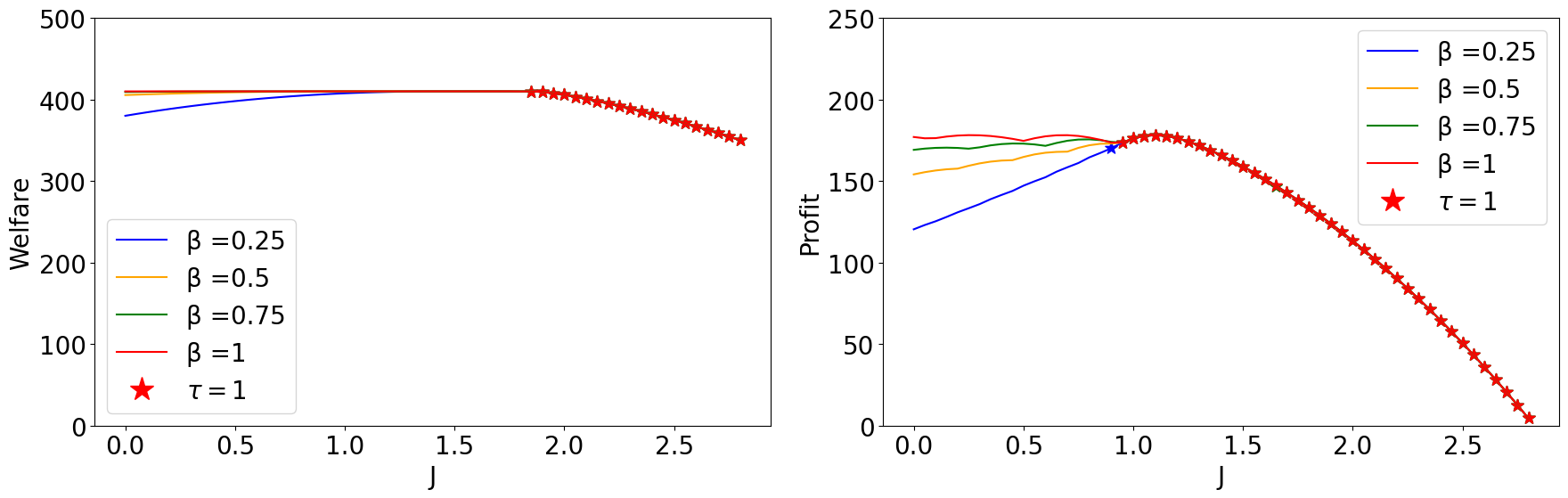}
    \caption{Optimal results for any given $J$ in the single-period scenario, when optimising welfare (left) or profit (right). We mark with a star the points where it is optimal to use $\tau=1$.}
    \label{fig:SinglePeriodDifferentJ}
\end{figure}

\begin{itemize}
    \item When $\beta<1$, the objective functions have an inverted U-Shape as a function of $J$. That is, it is better to have a positive Idle wage, as this is better for the drivers than earning the same amount via $e$. However, if $J$ is too large, too many drivers will be attracted rendering the solution suboptimal.
    \item As intuitive, $\tau=1$ is optimal when $J$ is large, because this is the situation where drivers are already receiving enough earnings through the Idle wage.
    \item In both graphs, the maximum is achieved within the ($\tau=1$)-range, as predicted by Theorem \ref{thm:tau1isoptimal}.
\end{itemize}

We know that if $J$ and $\tau$ have to take a unique value in a multi-period framework, then the optimal combination might include payments via trips (i.e. $\tau<1$) and through Idle wage (i.e. $J>0$). Moreover, from Theorem \ref{Thm:JisZeroForLargeA} we know that when there is a large number of available drivers (represented in this case by $A_h\gg 0$ in Eq. \ref{Eq:Formulaell}), $J$ becomes lower in the for-profit case. 

These two effects are combined in Table \ref{tab:AChangesTwoPeriods}. Therein, we simulate a two-periods model, considering the least and most demanded periods according to $\lambda_h$, which are 4 AM and 7 PM (i.e., periods $h=4$ and $h=19$), respectively. We slightly modify them together, to increase or decrease the number of available drivers, by changing the corresponding $A_h$. Table \ref{tab:AChangesTwoPeriods} contains the results for 6 different values of $\beta$, and where we jointly increase the values of $A_4$ and  $A_{19}$. Let us denote by $A_B$ the bi-dimensional vector $A_B=(A_4,A_{19})$.

We begin analysing the for-profit case in Table \ref{tab:AChangesTwoPeriods}. If $\beta$ is not close to 1, then the efficiency of paying through $J$ dominates the analysis, and we obtain that $\tau=1$. Only when drivers are not so risk-avert, i.e. when $\beta=0.95$, we find that $\tau$ might be lower than 1, and this happens exactly as $A_B$ increases. In other words, and as suggested by Theorem \ref{Thm:JisZeroForLargeA}, the greater the $A_B$, the more convenient to pay via $e$, because otherwise too many drivers will be connected and receiving Idle wage.

\begin{table}[H]
\centering
\begin{tabular}{|c|c|c|c|c|c|c|c|}
\hline
$\beta$ & $A_B$         & \multicolumn{3}{c}{Welfare} & \multicolumn{3}{|c|}{Profit} \\
     &   $(A_4,A_{19})$           & Optimal J & $\tau$ & Value    & Optimal J & $\tau$ & Value    \\
\hline
0.2  & (3.5, 44.0) & 1.5       & 0   & 420.15 & 1.1       & 1   & 181.6  \\
     & (4.0, 44.5) & 1.4       & 0   & 420.48 & 1.1       & 1   & 181.6 \\
     & (4.5, 45.0) & 1.4       & 0   & 420.8 & 1.1       & 1   & 181.7 \\
     & (5.0, 45.5) & 1.4       & 0   & 420.8 & 1.1       & 1   & 181.6  \\
     & (5.5, 46.0) & 1.4       & 0   & 421.3 & 1.1       & 1   & 181.5 \\
\hline
0.35 & (3.5, 44.0) & 1.3       & 0.1 & 420.6 & \multicolumn{3}{|c|}{Same as the scenario $\beta = 0.2$} \\
     & (4.0, 44.5) & 1.2       & 0   & 421.1 & \multicolumn{3}{|c|}{}                          \\
     & (4.5, 45.0) & 1.1       & 0   & 421.5 & \multicolumn{3}{|c|}{}                          \\
     & (5.0, 45.5) & 1         & 0   & 421.8 & \multicolumn{3}{|c|}{}                          \\
     & (5.5, 46.0) & 0.9       & 0   & 421.9 & \multicolumn{3}{|c|}{}                          \\
\hline
0.5  & (3.5, 44.0) & 1.2       & 0.1 & 420.6  & \multicolumn{3}{|c|}{Same as the scenario $\beta = 0.2$}   \\
     & (4.0, 44.5) & 1.1       & 0   & 421.2 &  \multicolumn{3}{|c|}{}  \\
     & (4.5, 45.0) & 1         & 0   & 421.8  &  \multicolumn{3}{|c|}{}  \\
     & (5.0, 45.5) & 1         & 0   & 422.3 &   \multicolumn{3}{|c|}{}  \\
     & (5.5, 46.0) & 0.9       & 0   & 422.7 &   \multicolumn{3}{|c|}{}  \\
\hline
0.65 & (3.5, 44.0) & 0.9       & 0.1 & 420.7 & \multicolumn{3}{|c|}{Same as the scenario $\beta = 0.2$}  \\
     & (4.0, 44.5) & 0.8       & 0   & 421.3  &   \multicolumn{3}{|c|}{}   \\
     & (4.5, 45.0) & 0.9       & 0.1 & 421.8 &   \multicolumn{3}{|c|}{}    \\
     & (5.0, 45.5) & 0.9       & 0.1 & 422.3 &   \multicolumn{3}{|c|}{}    \\
     & (5.5, 46.0) & 0.8       & 0.1 & 422.8 &   \multicolumn{3}{|c|}{}    \\
\hline 
0.8  & (3.5, 44.0) & 0.7       & 0.1 & 420.7 & \multicolumn{3}{|c|}{Same as the scenario $\beta = 0.2$}  \\
     & (4.0, 44.5) & 0.5       & 0   & 421.2 &   \multicolumn{3}{|c|}{}     \\
     & (4.5, 45.0) & 0.7       & 0.1 & 421.8 &  \multicolumn{3}{|c|}{}   \\
     & (5.0, 45.5) & 0.8       & 0.2 & 422.3 &    \multicolumn{3}{|c|}{}     \\
     & (5.5, 46.0) & 0.8       & 0.2 & 422.8 &    \multicolumn{3}{|c|}{}    \\
\hline    
0.95 & (3.5, 44.0) & 0.3       & 0   & 420.7 & 1.1       & 1   & 181.6  \\
     & (4.0, 44.5) & 0.6       & 0.1 & 421.2 & 1.1       & 1   & 181.6 \\
     & (4.5, 45.0) & 0.5       & 0.1 & 421.8 & 0.3       & 0.9 & 182 \\
     & (5.0, 45.5) & 0.4       & 0.1 & 422.3 & 0.3       & 0.9 & 182.2 \\
     & (5.5, 46.0) & 0.3       & 0.1 & 422.8  & 0.3 & 0.9 & 182.5\\
     \hline
\end{tabular}
\caption{Optimal value of $\tau$ and $J$, if they have to remain fixed throughout the day, for different combinations of $A_4, A_{19}$, and $\beta$.}   
\label{tab:AChangesTwoPeriods}
\end{table}

In order to analyse the welfare-optimisation scenario, it is convenient to first look into Figure \ref{Fig:ValueVSTauMultiPeriod}, where we depict the value of the two objective functions as a function of $\tau$, if a single $\tau$ must be used during the whole day. There we observe that the total welfare changes little with $\tau$. In Table \ref{tab:AChangesTwoPeriods}, we can observe that: if $\beta$ is low, we still obtain that greater values of $A_B$ lead to a lower $J$, for the exact same reasons discussed in the for-profit scenario. However, when $\beta$ is close to 1, there is no clear relationship between the two.

\begin{figure}[H]
\centering
\includegraphics[width=1\linewidth]{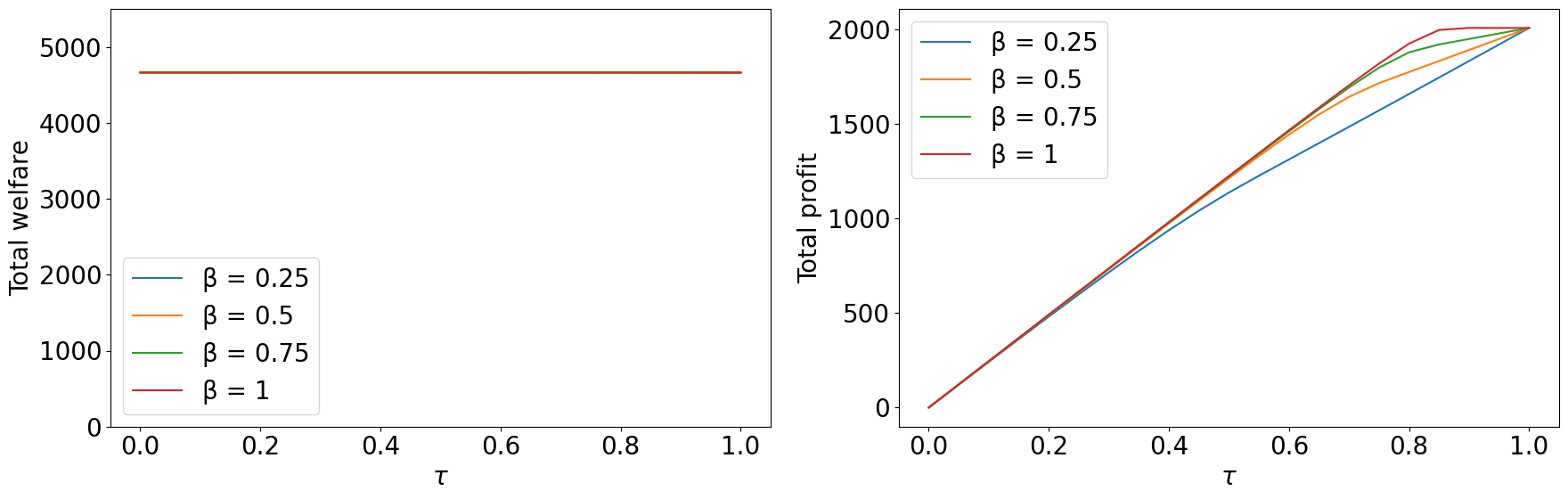}
\caption{Optimal results for any given $\tau$ in the full-day scenario, where each $J_h$ is optimised independently and $\tau$ is fixed.}
\label{Fig:ValueVSTauMultiPeriod}
\end{figure}

In the full-day scenario, we considered three potential alternatives about $J$: Fully flexible, fixed, or the double-block minimum wage constraint. Let us begin analysing the case in which $J$ can be adjusted by period. In this case, each period is optimised indepedently, so it is optimal to use $\tau=1$. This is shown in Fig. \ref{Fig:ValueVSTauMultiPeriod}. In the optimal welfare case, the objective function is almost constant, which is consistent with Theorem \ref{thm:tau1isoptimal} that predicts that the objective functions increase, but not necessarily strictly, with $\tau$. In the for-profit case, it does increase significantly with $\tau$; the lower the $\beta$ (i.e., the more risk-avert the drivers), the more relevant it is to utilize $\tau=1$. 

The need for adjusting to a fluctuating demand gets clearly illustrated in Fig. \ref{Fig:JPerHour}, where we show the optimal $J$ per hour. Note that high-demand periods require a greater $J$, i.e., to attract more drivers. It is worth noting that the for-profit scenario offers a substantially lower $J$ than when optimising welfare. That is, a monopolistic situation in which the platform may decide on the price would lead to a undersupplied system in which passengers wait more than what is socially optimal. We remark that, as in every period it is optimal to use $\tau=1$, then $\beta$ plays no role in this figure, which is why we show a single curve.

\begin{figure}[H]
\centering
\includegraphics[width=1\linewidth]{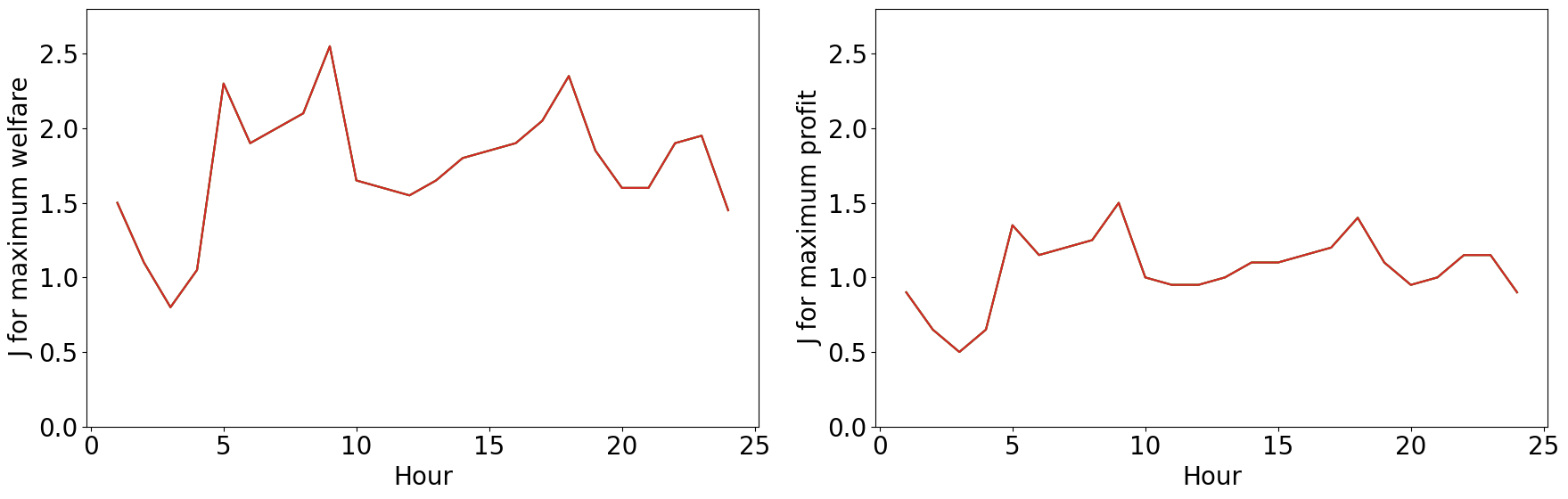}
\caption{Optimal $J_h$ per hour in the full-day scenario.}
\label{Fig:JPerHour}
\end{figure}

Fig. \ref{fig:fixedJandtauforwholedayscenario} shows the results when a single $J$ and $\tau$ must be decided for the whole day. As discussed in the two-periods scenario, here it is no longer optimal to use $\tau=1$. Moreover:
\begin{itemize}
    \item If $\beta=1$, i.e., drivers are risk-neutral, it is actually optimal to use $J=0$. This happens because $J$ presents no added benefit when $\beta=1$, and $e$ can capture the changes in price.
    \item If $\beta<1$, we keep some of the conclusions from the single-period scenario, namely: i) The objective functions have an inverted U-shape, and ii) If $J$ is large enough, it is convenient to use $\tau=1$.
\end{itemize}

\begin{figure}[h]
    \centering
    \includegraphics[width=150mm]{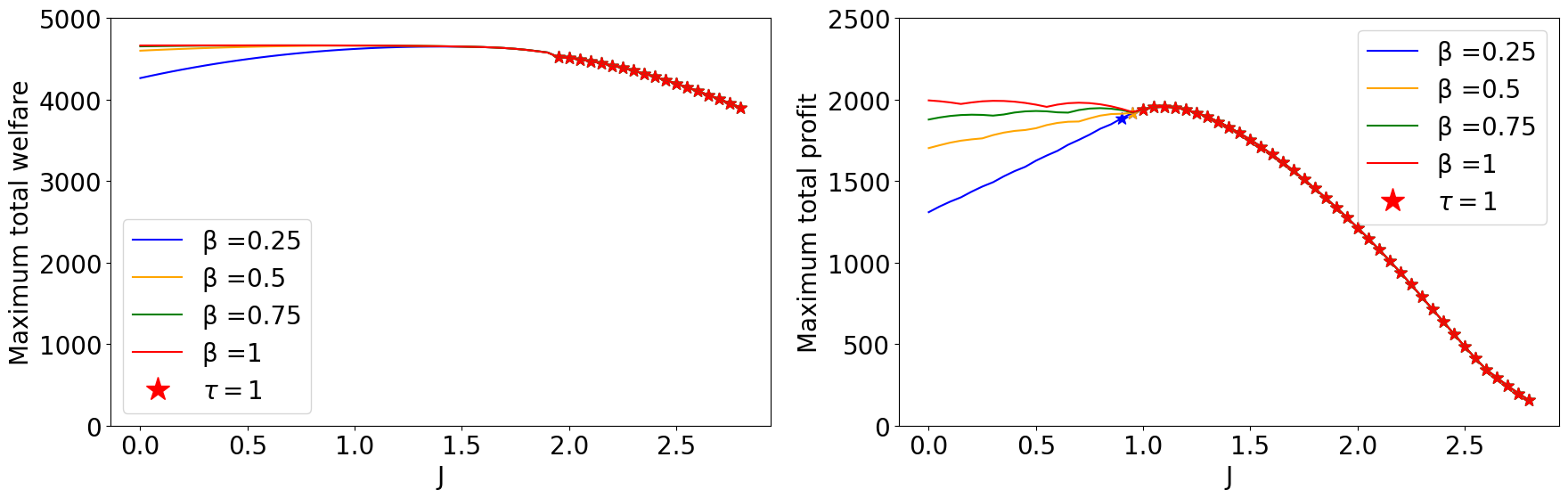}
    \caption{Optimal results when $J$ and $\tau$ take a single value in the full-day scenario. We mark with a start the values of $J$ for which it is optimal to use $\tau=1$.}
    \label{fig:fixedJandtauforwholedayscenario}
\end{figure}

Finally, we have the case in which the double-block minimum wage constraints must be satisfied, according to the model of Section \ref{sec:multiperiodintervals}. Let us remark that numerically solving problem \eqref{eq:Opt-MultiPeriod-MinimumWage} is very complex. Not only the equilibrium conditions are not linear, but the restrictions are combinatorial. In other words, there is no simple way to determine which two 4-hours blocks should be used. To face this, we design the following heuristic: We first solve each period independently and obtain a first vector $\bold{J_0}$. Within $\bold{J_0}$, we search for the two disjoint 4-hours blocks where $M(\bold{J_0})$ achieves its maximum. We fix those blocks and optimise accordingly. That is, denoting by $h_1(\bold{J_0}), h_2(\bold{J_0})$ the start of the interval, we find the optimal solution to:

\begin{equation}\label{eq:Opt-MultiPeriod-MinimumWageHeuristi}
    \begin{array}{cl}
        \displaystyle\max_{p,J,\tau,e,I,L,Q} & \sum_{h=1}^{24}\theta(p_h,J_h,\tau, e_h,I_h,L_h,Q_h) \\
         s.t. &\left\{\begin{array}{l}
         p,J\geq 0, \tau \in [0,1],\\
         \sum_{h=h_1(J_0)}^{h_1(J_0)+3}J_h+\sum_{h=h_2(J_0)}^{h_2(J_0)+3}J_h\geq J_{\min},\\
        \forall i\in [24],\,\,(e_i,I_i,L_i,Q_i)\in\Eqset(p_i,J_i,\tau).
        \end{array}\right.
    \end{array}
\end{equation}

This scenario is illustrated in Fig. \ref{Fig:MinWage}, where we use $\beta=0.25$. Therein, we compare the unconstrained scenario (where $J$ can be optimised per period) against the constrained one with different values for the minimum wage. As discussed in Section \ref{sec:multiperiodintervals}, this constraint might reduce the value of the objective function, which happens in every case in the for-profit scenario, and when the minimum wage is too large in the welfare scenario.

\begin{figure}[H]
\centering
\includegraphics[width=1\linewidth]{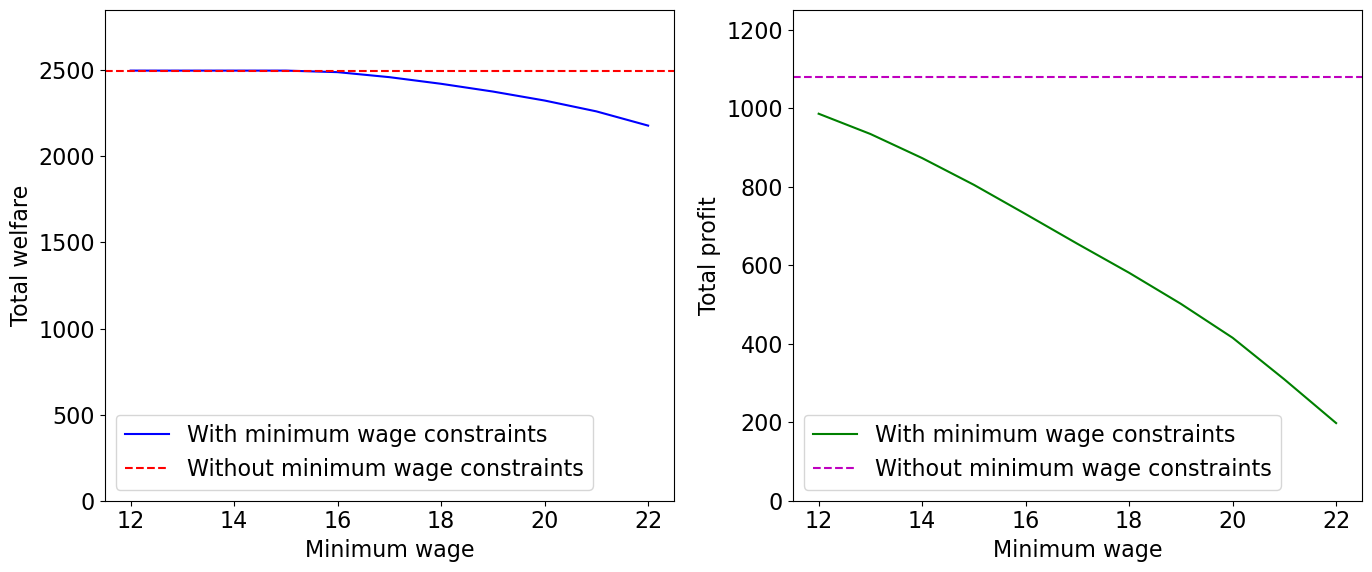}
\caption{Optimal welfare (left) and profit (right), with or without the minimum wage constraints.}
\label{Fig:MinWage}
\end{figure}

\section{Conclusions}\label{sec:conclusions}
In recent years, many countries have tried different policies to regulate the relationship between ride-hailing drivers and platforms. However, there is still no clear answer in terms of which measure could provide better job quality to the drivers, and still make the business profitable. In this paper, we analyse one potential policy, namely including and Idle wage for the drivers, so that when they are connected but not transporting passengers they can still receive some income. Ours is a macroscopic economic analysis, focusing on how are the supply-demand equilibria affected by the introduction of the Idle wage, when a crucial assumption of risk-aversion is considered.

We prove that in a single-period scenario, paying only through Idle wage is always optimal, both in for-profit and welfare-maximisation contexts. This result occurs because drivers prefer to earn money through the Idle wage, as this is less random, but for the platform it is indifferent how to pay. However, the Idle wage cannot deal with the demand fluctuations, as it is not affected by dynamic pricing, which is the main tool that platforms have to match the supply and demand levels. Thus, in a multi-period scenario, it is crucial to decide on a rule about how much the Idle wage could change. We study two extreme alternatives, namely the Idle wage being fully flexible or no flexible at all, and we propose an intermediate alternative where the Idle wage needs to build an exogenous minimum wage through reasonable intervals.

While ours is a simplified model, that would require several adjustments to be implemented, it does provide three strategic conclusions:
\begin{itemize}
    \item The Idle wage can be an effective way to make the drivers' earnings more predictable, while increasing the efficiency of the system.
    \item The trade-off between a stable income and the ability to match demand and supply is one of the main obstacles.
    \item A relevant concern can arise when the number of available drivers is large, as even a small Idle wage could reduce profit too dramatically.
\end{itemize}

From a future research perspective, the most important direction is about drivers' behaviour. Previous studies have shown that drivers do coordinate (e.g., \cite{schroder2020anomalous}), so this model should be combined with game theory tools in order to capture these more complex decisions. Moreover, here we have focused on ride-hailing companies; delivery drivers face a similar problem in terms of job quality, but the economic structure is different, so extending this method to these services is yet another relevant avenue for future studies. On a different note, here we have assumed that there is a single platform that can act in a monopolistic way: when the competition between platforms is included, they compete for users but also for the drivers, so the Idle wage can play a role there as well. 

Finally, drivers are known to be diverse, with some of them driving as their main job and others to complement their salaries (\cite{fielbaum2021sharing,ramezani2022empirical}); this suggests that some of them might be risk averse and others risk seekers. It could be possible to design a system where different options are offered to each driver. However, this is not a trivial task, as the power imbalance between drivers and platforms can lead to a situation where all drivers are led to opt for the same option, masking a situation where there is no actual choice. Contract theory might offer some tools to face such a problem (\cite{bolton2004contract}).

\section*{Acknowledgments}
This paper will be presented at the Conference in Emerging Technologies in Transportation Systems (TRC-30),
September 02-03, 2024, Crete, Greece. Dr David Salas has been partially supported by the Center of Mathematical Modeling CMM, grant FB210005 BASAL funds for centers of excellence (ANID-Chile) and by the project FONDECYT 11220586 (ANID-Chile).

\newpage
\printbibliography

\appendix
\section*{Appendix 0: Summary of Variables and Parameters}\label{app:summ_var}
\Cref{Tab:BasicNotation} summarizes the notation we use throughout the paper. 

\begin{table}[h]
    \centering
    \begin{tabular}{|c|c|c|}
    \hline
    \textbf{Symbol} & \textbf{Meaning} &\textbf{Range} \\
    \hline
    $p$ & Price per trip charged to the users & $[0,\infty)$       \\
    $\lambda$ & Riders connecting to the app per time unit & Constant\\
    $Q$ & Trips served per time unit & $[0,\lambda]$ \\
    $t$ & In-vehicle time to serve a user & Constant \\
    $T$ & Waiting time & $(0,\infty)$ \\
    $L$ & Supply size & $[0,\infty)$ \\
    $\tau$ & Percentage of the fare kept by the company & $[0,1]$ \\
    $I$ & Number of idle driver & $[0,\infty)$ \\
    $D$ & Demand function & $[0,\infty)$ \\
    $\ell$ & Supply function & $[0,\infty)$ \\
    $e$ & expected earnings from trips & $[0,\infty)$\\
    $J$ & Idle wage & $[0,\infty)$\\
    \hline
    \end{tabular}
    \caption{Basic notation of the model}
    \label{Tab:BasicNotation}
\end{table}

\section*{Appendix 1: Proof of \Cref{thm:ExistenceEquil}}

If $J = 0$, we can always choose $(e,I,L,Q) = (0,0,0,0)$ as an equilibrium. Thus, let us consider the case where $J>0$, and let us take $z\in (0,\infty)$ as the pickup time. We can compute 
\begin{align*}
Q_z &= D(p,z),\\
I_z &= T^{-1}(z),\\
L_{1}(z) &= I_z + (t+z)Q_z,\\
e_z &= (1-\tau)p\frac{Q_z}{L_{1}(z)},\\
L_{2}(z) &= \ell(e_z, J_z).
\end{align*}
Note that $Q_z,I_z,e_z,L_1(z),L_2(z)> 0$.
It is not hard to see that $z\in( 0,+\infty)\mapsto L_i(z)$ for $i=1,2$ are continuous mappings. Now, let us consider the following two limit cases:
\begin{itemize}
    \item $z\to \infty$: In this case, $Q_z\to 0$ and $I_z\to 0$. Then, applying \eqref{eq:QuickEvanescent-T}, we get that $L_1(z) \to 0$. In parallel, one has that
    \[
    0\leq e_z = (1-\tau)p\frac{Q_z}{L_1(z)} \leq (1-\tau)p\frac{1}{t+z} \to 0.
    \]
    Thus, continuity of $\ell$ yields that $L_2(z) = \ell(e_z,J) \to \ell(0,J)>0$. We get that there must exist $z^+\in (0,\infty)$ large enough such that $L_2(z^+) - L_1(z^+) > 0$.
    \item $z\to 0$: In this case, $Q_z \to \sup_T D(p,T) = D(p,0)<\infty$ and $I_z\to \infty$. Then, $L_1(z) \to \infty$. We get that 
    \[
    0\leq e_z \leq (1-\tau)p\frac{Q_z}{L_1(z)} \leq (1-\tau)p\frac{D(p,0)}{I_z} \to 0.
    \]
    Thus, continuity of $\ell$ yields that $L_2(z) = \ell(e_z,J) \to \ell(0,J)<\infty$. We get that there must exist $z^-\in (0,z^+)$ small enough such that $L_2(z^-) - L_1(z^-) < 0$.
\end{itemize}
Applying the well-known Bolzano's theorem (also known as the intermediate value theorem), continuity of $L_1(z) - L_2(z)$ entails that there exists $z_0\in (z^-,z^+)$ such that $L_2(z_0) - L_1(z_0) = 0$. Then, setting $L_{z_0}$ the common value of $L_1(z_0)$ and $L_2(z_0)$, we get that $(e_{z_0}, I_{z_0},L_{z_0},Q_{z_0})$ is an equilibrium in $\mathrm{Eq}(p,J,\tau)$. 

\section*{Appendix 2: Proof of \Cref{thm:OneVariableSuffices}}
Let us suppose that $Q_1 = Q_2$, and let $\bar{Q}$ be the common value.\\ 
    
    Noting that the function $D(p,\cdot)$ is one-to-one on $[0,\infty]$, we can define $z = D(p,\cdot)^{-1}(Q)$, with the convention that $\infty = D(p,\cdot)^{-1}(0)$. Then, the number of idle drivers is uniquely given by $\bar{I} = T^{-1}(z)$, with the convention that  $0 = T^{-1}(\infty)$. Then, the labour force $\bar{L}$ is uniquely determined by equation \eqref{eq:balanceLaborForce}. If $\bar{Q}= 0$, then $\bar{L} = 0 + (t+\infty)\cdot 0 = 0$. Finally, $\bar{e}$ is computed using \eqref{eq:expectedEarnings}, in terms of $(p,\bar{L},\bar{Q},\tau)$ (recalling the convention that $\bar{e} = 0$ if $\bar{L} = 0$). Thus, both tuples must coincide with $(\bar{e},\bar{I},\bar{L},\bar{Q})$.

\section*{Appendix 3: Proof of \Cref{Thm:ExistenceOfSolution}}
Let us first show that \eqref{eq:Problem-single-period} has a closed feasible set. Indeed, let us consider a sequence $\bold{x}_n = (p_n,J_n,\tau_n,e_n,I_n,L_n,Q_n)$ of feasible points converging to a limit tuple $\bold{x} = (p,J,\tau,e,I,L,Q)$. Clearly, $p,J\geq 0$ and $\tau \in [0,1]$, so we only need to prove that $(e,I,L,Q) \in \mathcal{E}(p,J,\tau)$. Again, since $e_n,I_n,L_n,Q_n\geq 0$ for all $n\in \mathbb{N}$, then it is direct that $e,I,L,Q\geq 0$. Moreover, by continuity of the supply function $\ell$, we get that $L = \ell(e,J)$. For the other three equations, we need to consider two different cases:
\begin{enumerate}
    \item \textbf{First case, $Q>0$:} In this case, we have that for $n\in\mathbb{N}$ large enough, $Q_n>0$. Moreover, 
    \[
    L = \lim_n L_n = \lim_n I_n + (t + T(I_n))Q_n \geq \lim_n tQ_n > 0.
    \]
    Thus, since $L>0$, we also have that  for $n\in\mathbb{N}$ large enough, $L_n>0$ and so, we can write
    \[
    e_n = (1-\tau_n)p_n \frac{Q_n}{L_n}.
    \] 
    Since the expression of the right-side is continuous at $(p,L,Q,\tau)$ with $L>0$, we deduce that
    \[
    e = (1-\tau)p \frac{Q}{L}.
    \]
    Finally, continuity also entails that $T(I_n)\to T(I)$. Note that $T(I)<\infty$. Indeed, if not, given that $Q>0$ we would have that
    \[
    L = \lim_n I_n (t + T(I_n))Q_n = I + (T + \infty) Q = \infty,
    \]
    which would be a contradiction. Therefore, continuity entails that $L = I + (t + T(I))Q$ and that $Q = D(p,T(I))$. With this, all equations in \eqref{eq:EquilibriumSet} are verified and we conclude that $(e,I,L,Q)\in \mathcal{E}(p,J,\tau)$.
    \item \textbf{Second case, $Q=0$:} Assume first that $(Q_n)$ admits a subsequence where $Q_{n_k} = 0$, for every $k\in \mathbb{N}$. Then, $(0,0,0,0) = (e_{n_k},I_{n_k},L_{n_k},Q_{n_k}) \to (e,I,L,Q)$. Moreover, $J_{n_k} = 0$ since for every $a\geq 0$, $\ell(a,\cdot)^{-1}(0) = 0$. Then, $J = 0$. Then, regardless the values of $p$ and $\tau$, we have that $(e,I,L,Q) = (0,0,0,0) \in \mathcal{E}(p,0,\tau)$.\\
    
    Assume now that $Q_n>0$ for all $n\in \mathbb{N}$. Again, a continuity argument yields that $e,I,Q,L\geq 0$ and that $L = \ell(e,J)$. Let $z_n = D(p_n,\cdot)^{-1}(Q_n)$. If $(z_n)$ admits a bounded subsequence, let us say with $\sup_k z_{n_k} = M < \infty$, then we could write
    \[
    Q = \lim_k Q_{n_k} = \lim_k D(p_{n_k},z_{n_k}) \geq \lim_k D(p_{n_k}, M) = D(p,M)>0.
    \]
    Thus, we get that $z_n \to \infty$. Then, we can write that $I_n = I(z_n) \to 0$ and therefore, that
    \[
    L = \lim_n I_n + tQ_n + z_nD(p_n,z_n)\leq \lim_n z_nD(0,z_n) = 0,
    \]
    where the inequality follows from the fact that $D(p_n,z_n)\leq D(0,z_n)$, for every $n\in\mathbb{N}$, and where the last limit follows from the evanescent property \eqref{eq:QuickEvanescent-T}. Recalling that $\infty\cdot 0 = 0$, we deduce that $Q = D(p,T(I))$ and that $L = I + (t+T(I))Q$.

    To finish, we only need to check that $e = 0$. But this is trivial since $L = 0$, and so,  $0 = \ell(e,J)$ yields that $e = 0$. This proves that $(e,I,Q,L)\in \mathcal{E}(p,J,\tau)$.
\end{enumerate}
 We have proven that \eqref{eq:Problem-single-period} has a closed feasible set, that we will denote as $\bold{X}$. Now, we claim that there exists $M\geq 0$ large enough such that
 \begin{equation}\label{eq:Big-M}
\sup\{ \theta(\bold{x})\ :\ \bold{x}\in \bold{X}, \|\bold{x}\|\leq M\} = \sup\{ \theta(\bold{x})\ :\ \bold{x}\in \bold{X}\}.
 \end{equation}
 Suppose the contrary. Then, there exists a sequence $(\bold{x}_n)\subset X$ with $\theta(\bold{x}_n)\to \sup\{ \theta(\bold{x})\ :\ \bold{x}\in \bold{X}\}$ and $\|\bold{x}_n\| \to \infty$. 

 We claim that the sequences $(J_n)$ and $(p_n)$ must be bounded.
 \begin{enumerate}
     \item \textbf{Case 1, $\theta(\bold{x}) = \Pi(\bold{x})$:} Suppose first that $J_n\to \infty$. Then, $L_n\to \infty$. Recalling that the function $p\mapsto pD(p,0)$ is bounded by \eqref{eq:QuickEvanescent-p}, we have that
 \begin{align*}
     \Pi(\bold{x}_n) = \tau_np_nQ_n - J_nL_n \leq p_n D(p_n,0) - J_nL_n \to -\infty,
 \end{align*}
 which is a contradiction with the fact that $\Pi(\bold{0}) = 0$. Thus, $(J_n)$ must be bounded. Now, suppose that $p_n\to \infty$. Then, again using \eqref{eq:QuickEvanescent-p}, we have that
 \[
  \Pi(\bold{x}_n) = \tau_np_nQ_n - J_nL_n \leq p_n D(p_n,0)\to 0.
 \]
 Thus, in this case, the optimal value is reached at $\bold{x} = \bold{0}$, and $M$ can be taken as $M=0$. This is again a contradiction, which yields that $(p_n)$ is also bounded.

 \item \textbf{Case 2, $\theta(\bold{x}) = W(\bold{x})$:} Suppose first that $J_n\to \infty$. Then, $L_n\to \infty$. Recalling that the function $p\mapsto pD(p,0)$ is bounded by \eqref{eq:QuickEvanescent-p} and that $D(\cdot,0)$ is integrable, we have that
 \begin{align*}
     W(\bold{x}_n) = S(p_n,T(I_n)) + p_nQ_n - C(L_n) \leq \int_0^{\infty}D(z,0)dz + p_nD(p_n,0) - C(L_n) \to -\infty.
 \end{align*}
 The limit follows from the fact that $C(L_n) \to \infty$. This is a contradiction with the fact that $W(\bold{0}) = 0$. Thus, $(J_n)$ must be bounded. Now, suppose that $p_n\to \infty$. Then, again we have that
 \begin{align*}
     W(\bold{x}_n) = S(p_n,T(I_n)) + p_nQ_n - C(L_n) \leq \int_{p_n}^{\infty}D(z,0)dz + p_nD(p_n,0)\to 0.
 \end{align*}
 Thus, in this case, the optimal value is reached at $\bold{x} = \bold{0}$, and $M$ can be taken as $M=0$. This is again a contradiction, which yields that $(p_n)$ is also bounded.
 \end{enumerate}
The claim is then proven. Let $\bar{p} = \sup_n p_n$ and $\bar{J} = \sup_n J_n$. We have that $e_n \leq \bar{p}$, and so, $L_n \leq \ell(\bar{p},\bar{J})$. This yields that $I_n\leq \ell(\bar{p},\bar{J})$ and $Q_n \leq D(\bar{p},0)$. Thus, $(\bold{x}_n)$ is bounded, which is a contradiction. We conclude that \eqref{eq:Big-M} holds.

Set $M$ as given by \eqref{eq:Big-M}, and let $\bold{Y} = \{\bold{x}\in \bold{X},\|\bold{x}\|\leq M \}$ which is now  a compact set. Since $\theta(\cdot)$ is a continuous function, there must exists $\bold{x}^{\ast}\in \bold{Y}$ such that
\[
\theta(\bold{x}^{\ast}) = \sup_{\bold{x}\in\bold{Y}} \theta(\bold{x}) = \sup_{\bold{x}\in\bold{X}} \theta(\bold{x}).
\]
Since $\bold{x}^{\ast}\in \bold{X}$, then $\bold{x}^{\ast}$ is the desired solution. The proof is finished.

\section*{Appendix 4: Proof of \Cref{thm:tau1isoptimal}}

    Fix any $\tau_1,\tau_2\in [0,1]$, with $\tau_1<\tau_2$, and let $\bold{x} = (p_1,J_1,\tau_1,e_1,I_1,Q_1,L_1)$ be a feasible point of problem $(\mathcal{P})$.\\

    Suppose first that $Q_1>0$, and therefore $L_1>0$. Moreover, since $\tau_1<1$, we have $e_1>0$.
    For every $e\in\mathbb R_+$, we can define the mapping
    \begin{align*}
    \ell_e: \mathbb R_+ &\to [\ell(e,0),+\infty)\\
    J&\mapsto \ell_e(J) = \ell(e,J).
    \end{align*}
    Note that for each $e\in \mathbb R_+$, $\ell_e$ is strictly increasing and therefore, invertible. Now, let us define
    \[
    e_{\max} = \max\{ e\ :\ \ell(e,0)\leq L_1\} \in (0,\infty).
    \]
    Clearly, $e_1\in [0,e_{\max}]$. Then, we can define the function $\phi: [0,e_{\max}]\to\mathbb R_+$ given by $\phi(e) = \ell_e^{-1}(L_1)$. Fix $e\in (0,e_{\max})$. Recalling that $\frac{\partial \ell}{\partial J}>0$ and applying the well-known Implicit Function Theorem at point $(e,J) = (e,\phi(e))$, we get that there exists a continuously differentiable function $\varphi:(e-\delta,e+\delta)\to \mathbb R$, such that
    \begin{itemize}
        \item For every $e^{\prime}\in (e-\delta,e+\delta)$, $\ell(e,\varphi(e)) = L_1$.
        \item For every $(e^{\prime},J^{\prime})$ near $(e,\phi(e))$ such that $\ell(e^{\prime},J^{\prime}) = L_1$, one has that $J^{\prime} = \varphi(e^{\prime})$.
        \item $\varphi$ is continuously differentiable on $(e-\delta,e+\delta)$, with
        \[
        \varphi'(e^{\prime}) = - \left[ \frac{\partial \ell}{\partial J}(e^{\prime},\varphi(e^{\prime}))\right]^{-1} \frac{\partial \ell}{\partial e}(e^{\prime},\varphi(e^{\prime})).
        \]
    \end{itemize}
    Noting that, by construction, for every $e^{\prime}\in (e-\delta,e+\delta)$ the only possible value for $\varphi(e^{\prime})$ must be $\ell_{e^{\prime}}^{-1}(L_1)$, we conclude that $\varphi$ coincides with $\phi$ on $(e-\delta,e+\delta)$.  Since $e$ is arbitrary, we conclude that $\phi$ is continuously differentiable on $(0,e_{\max})$ with
    \[
        \phi'(e) = - \left[ \frac{\partial \ell}{\partial J}(e,\phi(e))\right]^{-1} \frac{\partial \ell}{\partial e}(e,\phi(e)).
    \]
    Moreover, the result can be extended to the closed interval $[0,e_{\max}]$ using the continuity and strict positivity of the partial derivatives of $\ell$.

    Now, we claim that $v(\tau_2) \geq \theta(\bold{x})$. To prove the claim, define $e_2 = (1-\tau_2)p_1\frac{Q_1}{L_1}$. Clearly, $0\leq e_2<e_1\leq e_{\max}$, and so, we can define $J_2 = \phi(e_2)$. It is not hard to verify that $\bold{x}^{\prime} = (p_1,J_2,\tau_2,e_2,I_1,Q_1,L_1)$ is also a feasible point for problem $(\mathcal{P})$. We now study the claim for cases:
    \begin{enumerate}
    \item \textbf{First case, $\theta = W$:} In this case, we can write
    \[
    v(\tau_2) \geq W(\bold{x}^{\prime}) = S(p_1,T(I_1)) + p_1Q_1 - C(L_1) = W(\bold{x}).
    \]
    Observe that if $Q_1 = 0$, then $L_1 = 0$ and $T(I_1) = \infty$, and so, in this case, $J_1 = 0$. Then, the point $(p_1,J_1,\tau_2,e_1,I_1,Q_1,L_1) = (p_1,0,\tau_2,0,0,0,0)$ is also feasible. Thus, the inequality $v(\tau_2) \geq W(\bold{x})$ also holds.

    \item\textbf{Second case, $\theta = \Pi$:} In this case, let us denote $e(\tau) = (1-\tau)p_1Q_1/L_1$ and $J(\tau) = \phi(e(\tau))$. Then, we can set $\bold{y}(\tau) = (p_1, J(\tau),\tau, e(\tau),I_1,L_1,Q_1)$. With this notation, we can write
    \begin{align*}
        \frac{\partial \Pi}{\partial \tau}(\bold{y}(\tau)) &=\frac{\partial}{\partial \tau}\left( \tau p_1Q_1 - J(\tau)L_1\right)\\
        &= p_1Q_1 - L_1\frac{\partial \phi}{\partial e}(e(\tau))\frac{\partial e}{\partial \tau}\\
        &=p_1Q_1 + p_1Q_1\frac{\partial \phi}{\partial e}(e(\tau))\\
        &= p_1Q_1 \left(1 - \left[ \frac{\partial \ell}{\partial J}(e(\tau),J(\tau))\right]^{-1} \frac{\partial \ell}{\partial e}(e(\tau),J(\tau))\right) \geq 0,
    \end{align*}
    where the last inequality follows from the hypothesis that
    \[
    \frac{\partial \ell}{\partial J}(e(\tau),J(\tau)) \geq \frac{\partial \ell}{\partial e}(e(\tau),J(\tau)).
    \]
    Since this holds for every $\tau \in [\tau_1,1]$, we conclude that 
    \[
    v(\tau_2) \geq \Pi(\bold{y}(\tau_2)) \geq \Pi(\bold{y}(\tau_1)) = \Pi(\bold{x}).
    \]
    Again, if $Q_1 = 0$, then $L_1 = 0$ and $T(I_1)= \infty$, and so, in this case, $J_1 = 0$. Similarly, the point $(p_1,J_1,\tau_2,e_1,I_1,Q_1,L_1) = (p_1,0,\tau_2,0,0,0,0)$ is also feasible. Thus, the inequality $v(\tau_2) \geq \Pi(\bold{x})$ also holds. 
    \end{enumerate}
    The claim is then proven. Finally, taking supremum over all feasible points $\bold{x}=(p,J,\tau,e,I,L,Q)$ with $\tau = \tau_1$, we conclude that $v(\tau_2)\geq v(\tau_1)$.

\section*{Appendix 5: Proof of \Cref{Thm:JisZeroForLargeA}}
 Take $\epsilon>0$. We need to prove that there is an $A$ large enough so that $J^*(A)<\epsilon$. First, we note that if $J=0$, then the profit of the company is just $\Pi_0=\tau p D(p,T)$ for the resulting pickup time $T$. Crucially, all these terms are positive, and therefore $\Pi_0 \geq 0$. Thus, it suffices to show that for a large enough $A$, having $J \geq \epsilon$ would yield negative profit.

 We have assumed that $p\cdot D(p,T) \xrightarrow[]{p\rightarrow \infty}0$. Therefore, for a given $T$, the function $q_1(p)=p\cdot D(p,T)$ is bounded and achieves its maximum. Let us denote $q_2(T)=\max_p p\cdot D(p,T)$. This function is decreasing, so it achieves its maximum at $T=0$. Denoting by $q^*=q_2(0)$, we know that $\Pi(\bold{x}) \leq \tau q^* - J\cdot L$. 
 
 Let us now explore what happens if $J \geq \epsilon$. Then $J \cdot L \geq \epsilon \cdot L = \epsilon \cdot A \cdot \rho(e,J) \geq \epsilon \cdot A \cdot \rho(0,J) \geq \epsilon \cdot A \cdot \rho(0,\epsilon)$. This means that

 \begin{equation*}
     \Pi(\bold{x}) \leq \tau q^* - \epsilon A \rho (0,\epsilon)
 \end{equation*}

Clearly, if $A$ is too large, this profit becomes negative, and in particular worse than $\Pi_0$. This shows that $J \geq \epsilon$ could not be optimal, which completes the proof.

\section*{Appendix 6: Numerical value of the parameters}

\begin{table}[H]
\centering
\begin{tabular}{|c|c|c|}
\hline
\textbf{Parameters} & \textbf{Numerical value} & \textbf{Unit} \\
\hline
$\kappa$ & 1.768 & Dimensionless \\
\hline
$\beta_p$ & -0.669 & 1/hour \\
\hline
$\beta_T$ & -1.134 & 1/hour \\
\hline
$K_T$ & 0.127 & Hour \\
\hline
$\alpha_T$ & -0.515 & Dimensionless \\
\hline
$\varepsilon_L$ & 1.2 & Dimensionless \\
\hline
$t$ & 0.25 & Hour \\
\hline
\end{tabular}
\caption{Numerical value of the parameters that do not depend on $h$.}
\label{tab:Parameters and Numerical values}
\end{table}

\begin{table}[H]
\centering
\begin{tabular}{|c|c|c|}
\hline
\textbf{Hour} & \textbf{$\lambda_h$} & \textbf{$A_h$} \\
\hline
1 & 30 & 13 \\
2 & 15 & 11 \\
3 & 10 & 12 \\
4 & 5 & 4.5 \\
5 & 15 & 4 \\
6 & 18 & 6 \\
7 & 39 & 11 \\
8 & 70 & 17.5 \\
9 & 120 & 22 \\
10 & 100 & 32.5 \\
11 & 80 & 28.5 \\
12 & 77 & 28 \\
13 & 73 & 25 \\
14 & 77 & 23 \\
15 & 79 & 23.3 \\
16 & 85 & 24 \\
17 & 100 & 25 \\
18 & 145 & 29 \\
19 & 163 & 45 \\
20 & 150 & 50 \\
21 & 130 & 43 \\
22 & 120 & 32 \\
23 & 110 & 29 \\
24 & 70 & 28.5 \\
\hline
\end{tabular}
\caption{Numerical value of the parameters that depend on $h$.}
\label{tab:Variables that depend on h}
\end{table}

\end{document}